\newcommand \lra {\longrightarrow}
\newcommand \absv [1]{\left \lvert #1 \right \rvert }
\newcommand \lp {\left(}
\newcommand \rp {\right)}
\newcommand \la {\langle}
\newcommand \ra {\rangle}
\newcommand{\set}[1]{\left\{ #1 \right\} }
\newcommand{\wt}[1]{\widetilde{#1}}
\newcommand{\norm}[2][]{\left \| #2 \right \|_{#1} }
\newcommand\RR{\mathbb{R}}
\newcommand\Id{\operatorname{Id}}
\newcommand\tPhi{\tilde \Phi}
\newtheorem{theorem}{Theorem}
\newtheorem{lemma}[theorem]{Lemma}
\newtheorem{proposition}[theorem]{Proposition}
\newtheorem{corollary}[theorem]{Corollary}
\newtheorem{dynass}[theorem]{Dynamical Assumption}
\theoremstyle{remark}
\newtheorem{remark}[theorem]{Remark}
\numberwithin{equation}{section}
\numberwithin{theorem}{section}
\DeclareMathOperator \supp {supp}
\DeclareMathOperator \Tr {Tr}
\DeclareMathOperator \Vol {Vol}
\DeclareMathOperator \Crit {Crit}
\DeclareMathOperator \Fix {Fix}
\title{Equidistribution of phase shifts in semiclassical potential scattering}
\author{Jesse Gell-Redman}
\address{Department of Mathematics, University of Toronto}
\email{jgell@math.toronto.edu}
\author{Andrew Hassell}
\address{Mathematical Sciences Institute, Australian National University}
\email{Andrew.Hassell@anu.edu.au}
\author{Steve Zelditch}
\address{Department of Mathematics, Northwestern University}
\email{zelditch@math.northwestern.edu}
\begin{document}

\begin{abstract}

Consider a semiclassical Hamiltonian $H :=   h^{2} \Delta + V -
E$ where $\Delta$ is the positive
Laplacian on $\mathbb{R}^{d}$, $V \in C^{\infty}_{0}(\mathbb{R}^{d})$
and $E > 0$ is an energy level.  We prove that under an appropriate
dynamical hypothesis on the Hamilton flow corresponding to $H$, the
eigenvalues of the scattering matrix $S_{h}(V)$ define a measure on
$\mathbb{S}^{1}$ that converges to Lebesgue measure away
from $1 \in \mathbb{S}^{1}$ as $h \to 0$.  
 \end{abstract}

\maketitle

\section{Introduction}\label{sec:intro}

We consider the semiclassical Hamiltonian
\begin{equation}
  \label{eq:operator}
  H := h^{2}\Delta + V - E
\end{equation}
for a potential function $V \colon \mathbb{R}^{d} \lra \mathbb{R}$
which is smooth and compactly supported, and the associated
family of scattering matrices $S_{h}$, defined in
\eqref{eq:scmatrixdef} below.  The goal of this paper is to study the
asymptotic distribution of the eigenvalues $e^{i\beta_{h,n}}$ of
$S_{h}$, the so-called phase shifts, in the limit $h \to 0$.  To this end, we define a measure $\mu_{h}$ on $\mathbb{S}^{1}$ by
\begin{equation}
  \label{eq:integralh}
  \la \mu_{h}, f \ra :=
  \frac{1}{c_{V}}(2\pi h)^{d-1}\sum_{\mbox{spec}(S_{h})} f(e^{i\beta_{h,n}})
\end{equation}
 for a continuous function $f \colon \mathbb{S}^{1}
\lra \mathbb{C}$.
Here $c_{V}$ is a constant related to the classical Hamiltonian flow
of $H$.  Specifically,
\begin{equation}\label{eq:constant}
  c_{V} = E^{(d-1)/2} \Vol (\mathcal{I}),
\end{equation}
where $\Vol (\mathcal{I}) $ is the
volume with respect to Liouville measure of the subset $\mathcal{I}$ of
$T^{*}\mathbb{S}^{d-1}$ of incoming bicharacteristic rays that
interact with the potential.  See Section \ref{sec:trace}, in
particular \eqref{eq:liouville} and \eqref{eq:interaction} for precise definitions.

Our main theorem, which follows immediately from Theorem
\ref{thm:mainthm} below, is the following.
\begin{theorem}\label{thm:prethm}
  Let $f \colon \mathbb{S}^{1} \lra \mathbb{C} $ be a continuous function satisfying
\begin{equation}\label{eq:compactsupport}
1 \not \in \supp f .
\end{equation}  If $V$ is non-trapping at energy $E$ and the sojourn relation
  associated to $H$ satisfies Assumption \ref{thm:dynass} below, then
  \begin{equation}
    \label{eq:equicompact}
    \lim_{h \to 0} \la \mu_{h}, f \ra = \frac{1}{2\pi} \int_{0}^{2\phi}
    f(e^{i\phi}) d\phi,
  \end{equation}
where the pairing on the left is that in \eqref{eq:integralh}.
\end{theorem}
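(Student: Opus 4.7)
The plan is to deduce Theorem \ref{thm:prethm} from Theorem \ref{thm:mainthm} via a Weyl-type density argument. The support condition $1 \notin \supp f$ is essential: the phase shifts $e^{i\beta_{h,n}}$ cluster only at $1$, so $\mu_{h}$ is naturally a Radon measure on $\mathbb{S}^{1} \setminus \{1\}$ whose total mass may blow up as $h \to 0$.

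First, I would establish a uniform tightness bound of the form $\mu_{h}(\{z : |z-1| \geq \epsilon\}) \leq C(\epsilon)$ for every $\epsilon > 0$, which should follow from the $k = 0$ case of the trace asymptotic presumably packaged into Theorem \ref{thm:mainthm}, equivalently from a Weyl law for the number of non-trivial phase shifts. Granted this tightness, Stone--Weierstrass on $C_{c}(\mathbb{S}^{1} \setminus \{1\})$ reduces the theorem to checking, for each integer $k \neq 0$,
\begin{equation*}
\lim_{h \to 0} \la \mu_{h}, z^{k} - 1 \ra = 0,
\end{equation*}
which matches the Lebesgue Fourier coefficient $\frac{1}{2\pi}\int_{0}^{2\pi}(e^{ik\phi}-1)\,d\phi = 0$. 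Rewriting the left side as a trace gives
\begin{equation*}
\la \mu_{h}, z^{k} - 1 \ra = \frac{(2\pi h)^{d-1}}{c_{V}} \Tr\bigl(S_{h}^{k} - \Id\bigr),
\end{equation*}
and this is precisely the statement I expect Theorem \ref{thm:mainthm} to deliver. Microlocally on the interacting region, $S_{h}$ is a semiclassical FIO on $\mathbb{S}^{d-1}$ quantizing the sojourn canonical transformation $\Phi$ on $T^{*}\mathcal{I}$, so $S_{h}^{k}$ quantizes $\Phi^{k}$; the trace localizes to the fixed points of $\Phi^{k}$, i.e.\ the closed orbits of the scattering map of period $k$. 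Assumption \ref{thm:dynass} on the sojourn relation is tailored exactly so that these fixed-point sets contribute strictly less than $h^{-(d-1)}$ after stationary phase, which kills the nonzero Fourier moments in the limit.

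The main obstacle is identifying the correct microlocal structure of $S_{h}$ near the \emph{diagonal} (the non-interacting rays, responsible for the eigenvalue accumulation at $1$ and for the a priori growth of $\mu_{h}$) and cleanly separating its near-identity part from the genuine FIO part on $\mathcal{I}$, so that the clean-intersection stationary phase calculation for $\Phi^{k}$ applies. Equally delicate is pinning down the normalization $c_{V} = E^{(d-1)/2}\Vol(\mathcal{I})$: it must emerge from the $k = 0$ piece of the trace formula (the leading-order Weyl asymptotic for phase shifts) in order to convert the trace-decay statement above into equidistribution against normalized Lebesgue measure, rather than against some other multiple of it.
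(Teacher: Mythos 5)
Your high-level plan coincides with the paper's: the authors deduce the theorem from a stronger statement (Theorem \ref{thm:mainthm}) which they prove by reducing, via a density argument, to the trace formula for polynomials vanishing at $1$ (Proposition \ref{thm:traceone}), and the trace formula is indeed computed microlocally by the fixed-point calculus sketched in your last paragraph. However there are two concrete problems with your reduction.

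First, a computation. For $k \neq 0$ one has $\frac{1}{2\pi}\int_0^{2\pi}(e^{ik\phi}-1)\,d\phi = -1$, not $0$, and correspondingly the trace formula gives $\lim_{h\to0}\langle \mu_h, z^k - 1\rangle = -1$ (equivalently $\Tr(S_h^k - \Id) = -\Vol(\mathcal{I})(2\pi h)^{-(d-1)} + o(h^{-(d-1)})$). Your claimed identity $\lim\langle\mu_h,z^k-1\rangle = 0$, and the asserted match to a vanishing Fourier coefficient, are both off by the constant; this does not ruin the structure but it means the trace formula you expect to prove is the wrong one.

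Second, and more seriously, tightness plus sup-norm Stone--Weierstrass does not close the argument. The monomials $z^k - 1$ vanish at $1$ but are not supported away from $1$, so approximating a given $f$ by a polynomial $p$ in $L^\infty$ leaves an error $\langle\mu_h, f - p\rangle$ over all of $\mathbb{S}^1$, and $\mu_h$ has infinite total mass for each $h$ (the phase shifts accumulate at $1$ and there are infinitely many of them). A bound $\mu_h(\{|z-1|\geq\epsilon\}) \leq C(\epsilon)$ gives you nothing against this error. What the paper proves instead is the uniform \emph{weighted} bound $|\langle\mu_h, f\rangle| \leq C\,\|f\|_w$ with $\|f\|_w = \sup_{z\neq1}|f(z)/(z-1)|$ (Lemma \ref{lem:uniform-bd}), together with density of polynomials vanishing at $1$ in that norm (Lemma \ref{lem:dense}, which is Stone--Weierstrass applied to $f/(z-1)$, not to $f$). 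Proving the weighted bound is the technical heart: it relies on the exponential estimate $\|(S_h - \Id)\phi_l\|_{L^2} \leq C e^{-cl}$ for spherical harmonics with $lh \geq R'$ (Lemma \ref{thm:veryclosetozero}, which uses uniform Bessel asymptotics and the Vasy--Zworski resolvent bound), followed by a dyadic count of eigenvalues near $z=1$. None of this is captured by the vague ``tightness from the $k=0$ trace asymptotic'' --- indeed the $k=0$ trace $\Tr(\Id)$ is infinite, and even $\Tr(S_h - \Id)$ is an oscillatory sum from which a counting bound does not follow directly. You need the pointwise eigenvalue decay of Lemma \ref{thm:veryclosetozero}, which your proposal does not anticipate.
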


The \textbf{sojourn relation}, described in Section \ref{sec:trace},
is related to the incoming and outgoing data of integral curves
of the classical flow associated to $H$, and it generalizes the concept of scattering angle
\cite{RSIII}.  Assumption \ref{thm:dynass} implies that, but it
stronger than, the statement that the set
of bicharacteristic rays that interact with the potential but pass
through it undeflected has
 measure zero.  The link between $S_{h}$ and the sojourn relation
comes from the fact, proven in \cite{HW2008} with earlier results in
\cite{A2005, Maj1976, RT1989, V,
 G1976} and also \cite{GMB}, that the
quantum scattering map $S_{h}$ is a semiclassical FIO, i.e.\ that the integral
kernel of the scattering matrix is an oscillatory integral whose
canonical relation is the sojourn relation, as we describe in
Section \ref{sec:trace}.  

In Section \ref{sec:weightedequi} 
we deduce the following corollary to Theorem \ref{thm:prethm}, which
says asymptotically how many eigenvalues of $S_{h}$ lie in a closed sector of
$\mathbb{S}^{1}$ not containing $1$.
\begin{corollary}\label{thm:countingcor}
  Given angles $0 < \phi_{0} < \phi_{1} < 2 \pi$, let $N_{h}(\phi_{0},
  \phi_{1})$ denote the number of eigenvalues $e^{i\beta_{h,n}}$ of
  $S_{h}$ with $\phi_{0} \le \beta_{h,n} \le \phi_{1}$ modulo $2\pi$.
  Then
  \begin{equation}
    \label{eq:number}
\lim_{h \to 0} (2\pi h)^{(d - 1)}   N_{h}(\phi_{0},
  \phi_{1}) = c_{V}  \frac{\phi_{1} - \phi_{0}}{2\pi}.
  \end{equation}
\end{corollary}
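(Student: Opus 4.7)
The plan is to deduce the corollary from Theorem \ref{thm:prethm} by a standard sandwich argument, approximating the indicator of the arc $\{e^{i\phi} : \phi_0 \le \phi \le \phi_1\}$ from above and below by continuous functions that vanish near $1 \in \mathbb{S}^1$. Denote this indicator by $\chi$, and observe that by the definition \eqref{eq:integralh} of $\mu_h$ as a sum of point masses one has
\begin{equation*}
\la \mu_h, \chi \ra = \frac{(2\pi h)^{d-1}}{c_V} N_h(\phi_0, \phi_1),
\end{equation*}
so \eqref{eq:number} is equivalent to showing $\la \mu_h, \chi \ra \to (\phi_1 - \phi_0)/(2\pi)$ as $h \to 0$. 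The hypothesis $0 < \phi_0 < \phi_1 < 2\pi$ ensures that $\supp \chi$ is a closed subset of $\mathbb{S}^1 \setminus \{1\}$, which is exactly what will allow continuous approximants to meet the support condition \eqref{eq:compactsupport}.

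For $\varepsilon > 0$ smaller than $\tfrac{1}{2}\min(\phi_0, 2\pi - \phi_1, \phi_1 - \phi_0)$, I would choose continuous functions $f_\varepsilon^\pm \colon \mathbb{S}^1 \to [0,1]$ with $f_\varepsilon^- \le \chi \le f_\varepsilon^+$, each vanishing in a neighborhood of $1$, and with $f_\varepsilon^+ - f_\varepsilon^-$ supported in two arcs of length $O(\varepsilon)$ about the endpoints $e^{i\phi_0}$ and $e^{i\phi_1}$; piecewise linear ramps suffice. By construction
\begin{equation*}
\frac{1}{2\pi} \int_0^{2\pi} f_\varepsilon^\pm(e^{i\phi})\, d\phi = \frac{\phi_1 - \phi_0}{2\pi} + O(\varepsilon).
\end{equation*}

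Since $\mu_h$ is a nonnegative measure, $\la \mu_h, f_\varepsilon^- \ra \le \la \mu_h, \chi \ra \le \la \mu_h, f_\varepsilon^+ \ra$. Applying Theorem \ref{thm:prethm} to each continuous test function $f_\varepsilon^\pm$ and letting $h \to 0$ gives
\begin{equation*}
\frac{\phi_1 - \phi_0}{2\pi} - C\varepsilon \le \liminf_{h \to 0} \la \mu_h, \chi \ra \le \limsup_{h \to 0} \la \mu_h, \chi \ra \le \frac{\phi_1 - \phi_0}{2\pi} + C\varepsilon,
\end{equation*}
and letting $\varepsilon \to 0$ yields the desired limit. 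The argument is essentially the portmanteau theorem for weak convergence of positive measures, and there is no substantive obstacle: the only potentially delicate point, that $\chi$ is discontinuous and so not directly handled by Theorem \ref{thm:prethm}, is harmless precisely because the singular locus of $\chi$ lies strictly away from $1 \in \mathbb{S}^1$, so that the squeezing approximants $f_\varepsilon^\pm$ can be arranged to satisfy the hypothesis \eqref{eq:compactsupport}.
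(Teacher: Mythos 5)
Your proposal is correct and takes essentially the same approach as the paper: squeeze the indicator of the arc between continuous approximants supported away from $1 \in \mathbb{S}^1$, apply Theorem \ref{thm:prethm} to each, and let the approximation parameter go to zero. You are slightly more explicit than the paper in recording that the upper approximant must also vanish near $1$ in order to satisfy \eqref{eq:compactsupport}, but the argument is the same.
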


\begin{remark} Notice the formal resemblance between  \eqref{eq:number} and the standard asymptotic formula for the number of eigenvalues of a semiclassical operator. Namely, suppose $A_h$ is a self-adjoint semiclassical pseudodifferential operator of order $0$ on a compact manifold $M$ of dimension $d-1$, with $\sigma(A_h)(x, \xi) \to \infty$ as $|\xi| \to \infty$. Let $\wt{N}_h(E)$ denote the number of eigenvalues of $A_h$ that are $\leq E$. Then 
 \begin{equation}
    \label{eq:numberpseudo}
\lim_{h \to 0} (2\pi h)^{(d - 1)}   \wt{N}_{h}(E) = \operatorname{vol}\{ (x, \xi) \mid \sigma(A_h)(x, \xi) \leq E \}. 
  \end{equation}
See \cite{DS1999}, \cite{Zw2012}.
\end{remark}

 In \cite{DGHH2013}, Datchev, Humphries, and the first two authors considered the case where the
potential $V$ in \eqref{eq:operator} is central, i.e.\ depends only
on $r = \absv{x}$.  In the central case, the eigenfunctions
of $S_{h}$ are the spherical harmonics, and they showed \cite[Thm
1.1]{DGHH2013}, that under certain assumptions on the scattering angle
function associated to $H$, the eigenvalues of the spherical harmonics with
angular momentum less than $R\sqrt{E}/h$, where $R$ is the
radius of the convex hull of the support of $V$,
equidistribute around the unit circle $\mathbb{S}^{1}$ as $h \to 0$.
See Section \ref{sec:comp} for a comparison of that work's results
with the results established here.  The idea for tackling the case
of non-central potentials using trace formulae comes from previous works of the third author \cite{Z1992},
\cite{Z1997}, where the distribution of eigenvalues of quantum maps was analyzed.

There is a wealth of work on the asymptotic properties of phase
shifts, including notably work from the 80's (e.g.\ papers by various
combinations of Birman, Sobolev, and Yafaev \cite{BY1980},
\cite{BY1982}, \cite{SY1985}),  and also more recent work (e.g.\ that of Doron and
Smilansky \cite{Sm1992}.)  The corresponding inverse problem
-- determining a potential or an obstacle from the scattering matrix
or other scattering data -- has also been pursued, \cite{G1976, Maj1976,
  KK, MT}.
We refer the reader to \cite{DGHH2013} for
yet more literature review.  See also \cite{Nov, N}.

\medskip

\noindent \textbf{Reduction to $E = 1$:}  Recall that the scattering matrix $S_{h}$ can be defined in terms of
generalized eigenfunctions as follows.  For $\phi_{in} \in
C^{\infty}(\mathbb{S}^{d-1})$, there is a unique solution to $H
u = 0$ satisfying
\begin{equation}
  \label{eq:generalizedEfn}
  u = r^{-(d-1)/2} \lp e^{-i \sqrt{E} r / h} \phi_{in}(\omega) + e^{i
    \sqrt{E} r/ h} \phi_{out}(-\omega) \rp + O(r^{-(d + 1)/2}),
\end{equation}
see e.g.\ \cite{GST}.  By definition
\begin{equation}\label{eq:scmatrixdef}
S_{h}(\phi_{in}) := e^{i\pi(d-1)/2}\phi_{out}.
\end{equation}
Below, we will refer to $\phi_{in}$ and $\phi_{out}$ as the
\textbf{incoming} and \textbf{outgoing data} of $u$.

One checks that 
\begin{equation}
  \label{eq:Eisone}
  S_{h, V}(E) = S_{\wt{h}, \wt{V}}(1),
\end{equation}
where $\wt{h} = h / \sqrt{E}, \wt{V} = V / E$, and $S_{h', V'}(E')$
denotes the scattering matrix for $(h')^{2}\Delta + V' - E'$.  Using
\eqref{eq:Eisone}, it is straightforward to conclude all the theorems
above and below from the same theorems in the case $E = 1$.  Thus we
assume the $E = 1$ for the remainder of the paper.


\section{Dynamics}\label{sec:trace}

As we describe in Section \ref{sec:scatteringmatrix}, the integral kernel
of $S_{h}$ is a
Legendrian-Lagrangian distribution associated to the sojourn
relation of $H$, the sojourn relation being a Legendrian submanifold $L \subset T^{*}\mathbb{S}^{d-1} \times
T^{*}\mathbb{S}^{d-1} \times \mathbb{R}$ related to the classical
Hamilton flow of $H$.  Here $T^{*}\mathbb{S}^{d-1} \times
T^{*}\mathbb{S}^{d-1} \times \mathbb{R}$ 
is endowed with the contact form $\pi_{L}^{*}\chi + \pi_{R}^{*}\chi -
d\tau$, $\chi$ being the canonical
one-form on $T^{*}\mathbb{S}^{d-1}$ given by $\zeta \cdot dz$ in local
coordinates $z$ and dual coordinates $\zeta$, and $\pi_{L}$ and $\pi_{R}$
being projections from $T^{*}\mathbb{S}^{d-1} \times
T^{*}\mathbb{S}^{d-1}$ onto the left and right factors, respectively.
See \cite{GS1994} for a review of the relevant symplectic geometry.  The manifold $T^{*}\mathbb{S}^{d-1}$ admits a
natural measure $\mu$, the \textbf{Liouville measure}, equal to the top
exterior power of the canonical
symplectic form $d\chi$.  Precisely,
\begin{equation}
  \label{eq:liouville}
  \mu = \absv{dz d\zeta}.
\end{equation}
To complete the definition of the constant $c_{V}$ in
\eqref{eq:constant}, it remains to define the set $\mathcal{I}$, which
we proceed to do now.

\medskip

\noindent \textbf{Review of classical dynamics:}

First, from \cite{S}.
Consider Newton's equations of motion
\begin{equation}\label{eq:bichar}
\ddot{x}(t) = F(x(t)), \;\; F = - 2 \nabla V.
\end{equation}
Since it adds no complexity at the moment, we relax the assumption on
$V$, assuming as in \cite{S} only that $V(x) = O(\absv{x}^{-2 -
  \epsilon}) = \mbox{Lip}(V)$, where $\mbox{Lip}$ is the Lipschitz
constant of $V$.  Given a solution to \eqref{eq:bichar}, the quantity $E :=
\absv{\dot  x(t)}^{2} + V(x(t))$ is a constant of the motion.  
We seek solutions of the form
\begin{equation}\label{eq:bicharform}
x(t) = a + t b + u(t), \;\;\ \lim_{t \to - \infty} |u(t)|  + |\dot{u}(t)| \to 0. 
\end{equation}
Write the equation for $u$ as
\begin{equation}\label{eq:u}
u(t) = \int_{- \infty}^t ds \int_{- \infty}^s F(a + b \tau + u(\tau)) d \tau ds.
\end{equation}
Lemma 1 of \cite{S} shows that for this $u$, $x(t)$ defined as in
\eqref{eq:u} satisfies Newton's law and \eqref{eq:bicharform}. We will assume that
$V$ is \textbf{non-trapping} at energy $1$,
meaning that every solution $x(t)$ to \eqref{eq:bichar} with $E = 1$ goes
to infinity both as $t \to - \infty$ and $t \to +\infty$.  One
checks that $x(t)$ also has the 
form in \eqref{eq:bicharform} as $t \to \infty$ with
$$u(t) = \int_t^{\infty} \int_{s}^{\infty} F(a + b tau + u(\tau)) d
\tau ds. $$  Thus we have the following.
\begin{theorem}
For all $a, b
  \in \mathbb{R}^{n}$ with $b \neq 0$, there exists a unique solution
  $x_{a,b}(t)$ satisfying Newton's law such that
$$\lim_{t \to - \infty} |x(t) - a - b t| + |\dot{x}(t) - b|  = 0. $$
If in addition $V$ is non-trapping at energy $E$ and $\absv{b} = \sqrt{E}$,
then there exist $c,d \in \mathbb{R}^{n}$ with $\absv{d} = \sqrt{E}$ such
that 
$$\lim_{t \to \infty} |x(t) - c - d t| + |\dot{x}(t) - d|  = 0. $$
\end{theorem}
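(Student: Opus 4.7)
The statement has two independent parts. The first, valid for all $a \in \mathbb{R}^{n}$ and $b \neq 0$ and requiring no non-trapping hypothesis, is the existence and uniqueness of a solution $x_{a,b}$ of \eqref{eq:bichar} with the prescribed linear asymptotics at $t = -\infty$. I would prove this by a contraction mapping argument on the integral equation \eqref{eq:u} and then extend globally. Fix $T_{0} \ll 0$ and let $X_{T_{0}}$ be the Banach space of $C^{1}$ maps $u \colon (-\infty, T_{0}] \lra \mathbb{R}^{n}$ with $\norm{u} := \sup_{t \le T_{0}} (\absv{u(t)} + \absv{\dot u(t)})$. Let $\mathcal{T}(u)$ denote the right-hand side of \eqref{eq:u}. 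Since $b \neq 0$ one has $\absv{a + b\tau} \ge \tfrac{1}{2}\absv{b}\absv{\tau}$ for $\absv{\tau}$ large, and combining this with the decay hypothesis $\absv{F(y)} \lesssim (1 + \absv{y})^{-2-\epsilon}$ shows that the double integral defining $\mathcal{T}(u)$ converges and is bounded by a quantity that tends to $0$ as $T_{0} \to -\infty$, uniformly on a small ball of $X_{T_{0}}$. The Lipschitz bound on $V$ similarly makes $\mathcal{T}$ a contraction on such a ball for $T_{0}$ sufficiently negative. The fixed point $u$ is $C^{2}$ by bootstrapping; differentiating \eqref{eq:u} twice shows that $x_{a,b}(t) := a + bt + u(t)$ solves Newton's law on $(-\infty, T_{0}]$ with the required decay of $u$ and $\dot u$, and the contraction estimate gives uniqueness on $(-\infty, T_{0}]$.

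To extend $x_{a,b}$ to all of $\mathbb{R}$, I would apply the standard initial-value theorem for second-order ODEs at $t = T_{0}$ and use the conserved energy $\absv{\dot x(t)}^{2} + V(x(t)) \equiv E$ to get an a priori bound on $\absv{\dot x}$, precluding finite-time blow-up. Global uniqueness is then inherited from local uniqueness for \eqref{eq:bichar}.

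For the second part, the non-trapping hypothesis forces $\absv{x_{a,b}(t)} \to \infty$ as $t \to +\infty$. In the compactly supported case of the present paper the conclusion is essentially immediate: there exists $T_{1}$ beyond which $x_{a,b}(t) \notin \supp V$ permanently, hence $\ddot x \equiv 0$ on $[T_{1}, \infty)$ and $x_{a,b}(t) = c + dt$ for some $c, d \in \mathbb{R}^{n}$, with $\absv{d} = \sqrt{E}$ forced by energy conservation. Under the general decay hypothesis of Simon, one must instead first upgrade non-trapping to a linear-escape estimate $\absv{x(t)} \gtrsim t$ for $t$ large; once that is known, $\absv{\ddot x(t)} = O(t^{-2-\epsilon})$ is integrable so $\dot x(t)$ has a limit $d$ with $\absv{d} = \sqrt{E}$, and rerunning the contraction argument of the first part from $+\infty$ backward produces the intercept $c$. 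The only substantive difficulty is this linear-escape estimate, which is the main technical content of Simon's lemma but which is trivial in the compactly supported setting actually used in the rest of the paper.
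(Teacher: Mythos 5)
Your proposal is correct and follows essentially the same route as the paper, which simply delegates the incoming part to Lemma~1 of Simon \cite{S} (exactly the contraction/Picard iteration on the integral equation \eqref{eq:u} that you spell out) and disposes of the outgoing part with a one-line ``one checks.'' You have also correctly pinpointed the one place where the paper's brevity hides real content: the passage from non-trapping to linear escape is where Simon's general argument does genuine work, whereas for the compactly supported $V$ actually used in the rest of the paper it is immediate (the trajectory is eventually free), and your treatment makes that distinction explicit.
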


In \eqref{eq:sojourn} below, we give a concrete definition of the
sojourn map, but for the moment we discuss the dynamics in the general
setting of \cite{BP}.  
The sojourn map ${\mathcal S}: T^* \mathbb{S}^{d-1} \to T^*
\mathbb{S}^{d-1}$ is a symplectic reduction of the classical
scattering map of \cite{H,S,BP, NT}.  Simon \cite{S}
denotes the scattering map by $(\Omega^-)^{-1} \Omega^+$,
where $\Omega^{\pm}$ are classical wave operators, defined by
\begin{equation}
  \label{eq:2}
  W^{\pm} = \lim_{t \to \pm \infty} \Phi_{-t} \circ \Phi^{0}_{t},
\end{equation}
where $\Phi$ is the solution operator at energy $E$ for Newton's
equations of motion \eqref{eq:bichar} and $\Phi^{0}$ is the solution
operator for Newton's law with $V \equiv 0$, i.e. it is standard
geodesic flow on $\mathbb{R}^{d}$.
In \cite{BP} the wave operators are denoted $W^{\pm}$ and
the fixed energy scattering map is denoted $S_E := W_{+}^{-1} \circ W_{-}$. It is defined
on the energy surface $\Sigma_E^0 = \{(x,\xi) \in
  T^{*}\mathbb{R}^{d} : \absv{\xi} = \sqrt{E}\}$ for
the free Hamiltonian and then on its reduction $\tilde{\Sigma}_E^0$,
i.e.\ on the set of free orbits of $\Phi^{0}$ of energy $E$. This
quotient symplectic manifold can be identified with a transversal
$\Gamma \subset \Sigma^0_E$. 
As is pointed out in \cite{BP}, the reduced symplectic form  $\omega_E$
 on $\tilde{\Sigma}_E^0$ is exact
i.e. $\omega_E = d \alpha_E$ for some $1-$form $\alpha_{E}$. Denote
the classical scattering map at
energy $E$ by $S_E$. Define a function $\tau_E$ (up to addition of a constant)
by
$$S_E^* \alpha_E - \alpha_E = d \tau_E. $$
In (6.4) of \cite{BP} the function $\tau_E$
is denoted $\Delta_0 = \Delta \circ W_-$ where
\begin{equation}\label{eq:almostsojourn}
\Delta(x) = - \int_{-\infty}^{\infty} \iota_X \alpha_{E} \circ \Phi_t (x) dt, \;\; x \in \Sigma_E.
\end{equation}
Here, $X = \dot \Phi_{t}$ is the Hamilton vector field of the Hamiltonian, which in
our context is $\absv{\xi}^{2} + V(x)$, and the
action form $\alpha_{E}$ is chosen so that $\iota_{X_0} \alpha_{E} =
0$ on $\Sigma_E^0$.

\medskip

\noindent \textbf{Explicit sojourn relation and the interaction
  region:}
We now resume our standing assumption that $E = 1$ and give a concrete
definition of the sojourn map $\mathcal{S} := \mathcal{S}_{1}$,
\begin{equation}
  \label{eq:sojourn}
  \mathcal{S} \colon T^{*}\mathbb{S}^{d-1} \lra T^{*}\mathbb{S}^{d-1}.
\end{equation}
Identify $\mathbb{S}^{d-1} \subset
\mathbb{R}^{d}$ with the unit sphere, and given $\omega \in
\mathbb{S}^{d-1}$, identify $T_{\omega}^{*}\mathbb{S}^{d-1}$ with
$\omega^{\perp}$, the space of vectors in $\mathbb{R}^{d}$ orthogonal to
$\omega$.  Given $\eta \in \omega^{\perp}$, there is a unique
bicharacteristic ray, i.e.\ a unique solution $(x_{\omega,\eta},
\xi_{\omega,\eta})$ to \eqref{eq:bichar}, satisfying
\begin{equation}
x_{\omega, \eta}(t) = t  \omega
+ \eta  \mbox{  for   } t << 0.
\end{equation}
By the non-trapping assumption, for $t >>
0$, $\ddot x_{\omega, \eta} = 0$, so the following definition makes sense,
\begin{equation}
  \label{eq:sojourndef}
  \begin{split}
    \mathcal{S}(\omega,\eta) := (\omega', \eta') \mbox{ where }
    x_{\omega,\eta} = t  \omega' + \eta'  \mbox{ for
    } t >>0.
  \end{split}
\end{equation}
We now interpret the \textbf{sojourn time} $\tau(\omega, \eta) :=
\tau_{E = 1}(\omega, \eta) $ as in \eqref{eq:almostsojourn} in our context, using the notation from
the previous subsection.
For Hamiltonian systems of the form \eqref{eq:bichar} on $\mathbb{R}^{d}$ the
action $\alpha := \alpha_{E = 1}$ restricted to $\Sigma_{E = 1}^{0} = \{ (x, \xi) \in
  T^{*}\mathbb{R}^{d} : \absv{\xi}^{2} = 1\}$ is given by
$\chi_{0} + dF$ where $\chi_{0} = \sum_{i = 1}^{d}\xi^{i} dx^{i}$ is
the canonical $1-$form on $\mathbb{R}^{2d}$ and, writing
$\Sigma_{E = 1}^{0} = \set{ (\omega t + \eta, \omega) : (\omega, \eta)
  \in T^{*}\mathbb{S}^{d-1}}$, we have $F(\omega t + \eta, \omega) = - t$.
(See \cite[Sect.\ 5]{BP} for details.) 
It follows that
\begin{equation}
  \label{eq:time}
  \begin{split}
    \tau(\omega, \eta) &= \int_{- \infty}^{\infty} \iota(X) \alpha_{E} \circ (x_{\omega,
      \eta}(s), \dot x_{\omega, \eta}(s)) ds \\
    &= \lim_{t \to \infty}  \int_{-t}^{t} \absv{\xi}^{2}((x_{\omega,
      \eta}(s), \dot x_{\omega, \eta}(s)) +  (XF) \circ (x_{\omega,
      \eta}(s), \dot x_{\omega, \eta}(s)) ) ds \\
    &= \lim_{t \to \infty}  \int_{-t}^{t} (1 - V)(x_{\omega,
      \eta}(s), \dot x_{\omega, \eta}(s)) ds +  F(t) - F(-t) \\
    &= \lim_{t \to \infty}  \int_{-t}^{t} (1 - V)(x_{\omega,
      \eta}(s), \dot x_{\omega, \eta}(s)) ds +  2t. \\
  \end{split}
\end{equation}
The total sojourn relation $L$, defined by
\begin{equation}
  \label{eq:5}
  L := \set{(\omega, \eta, \omega', - \eta', \tau(\omega, \eta)) : (\omega, \eta) \in
    T^{*}\mathbb{S}^{d-1} \mbox{ and } \mathcal{S}(\omega, \eta) = (\omega', \eta')},
\end{equation}
is a Legendrian submanifold of $T^{*}\mathbb{S}^{d-1} \times
T^{*}\mathbb{S}^{d-1} \times \mathbb{R}$ with the contact form
described above.  In particular, $\mathcal{S}$ is a symplectomorphism.

We define
the \textbf{interaction region}
\begin{equation}
  \label{eq:interaction}
  \mathcal{I} := \set{ (\omega, \eta) : \mbox{Image}(x_{\omega,\eta}) \cap \supp V
    \neq \varnothing}.
\end{equation}
Note that, if $(\omega, \eta) \in \mathcal{I}$, then $\mathcal{S}(\omega, \eta)
= (\omega', \eta') \in \mathcal{I}$, since $x_{\omega,
  \eta} = t \omega' + \eta'$ for $t >> 0$, hence the straight
line $t  \omega' + \eta'$ intersects $\supp V$, and thus $x_{\omega',
  \eta'} \cap \supp V \neq \varnothing$.  Similarly,
$\mathcal{S}(\mathcal{I}^{c}) \subset \mathcal{I}^{c}$, so since
$\mathcal{S}$ is invertible by uniqueness of solutions to ODEs,
\begin{equation*}
    \mathcal{S}(\mathcal{I}) = \mathcal{I} \quad \mbox{ and } \quad
  \mathcal{S}(\mathcal{I}^{c}) = \mathcal{I}^{c}.
\end{equation*}

Finally, we define the $l^{th}$ interacting fixed point set for $l \in \mathbb{Z}$,
\begin{equation}  
\label{eq:interactionfixed}
  \mathcal{F}_{l} := \set{ (\omega, \eta) \in \mathcal{I} : \mathcal{S}^{l}(\omega,\eta)
    = (\omega,\eta) }.
\end{equation}
We will make the following assumption.
\begin{dynass}\label{thm:dynass}
The sets $\mathcal{F}_{l}
  \subset T^{*}\mathbb{S}^{d-1}$ in \eqref{eq:interactionfixed} satisfy
  \begin{equation}
    \label{eq:zerofixed}
    \Vol(\mathcal{F}_{l}) = 0 \mbox{ for all } l \in \mathbb{Z}.
  \end{equation}
\end{dynass}

\begin{remark}
  The existence of potentials satisfying Assumption \ref{thm:dynass}
  was established in \cite{DGHH2013}.  Indeed, Assumption
  \ref{thm:dynass} is weaker than the dynamical assumption made in
  \cite{DGHH2013}, and example of potentials satisfying the strong
  assumption of that paper are established therein.  See Section
  \ref{sec:comp} for details.

The authors conjecture that Assumption \ref{thm:dynass} holds for
generic potentials $V$, though we do not pursue this question here.
There is a wealth of research on the topic of generic geodesic and
Hamiltonian flows, going back to Klingenberg-Takens \cite{KT} in the
case of Riemannian metrics.  See also \cite{PS} and \cite{PS2} for
results in the setting of obstacle scattering.  
\end{remark}

\begin{remark}\label{thm:scalingremark}
The total sojourn relation defined here agrees with the definition in 
\cite[Section 15]{HW2008}. To see this, write the
Hamiltonian system in \eqref{eq:bichar} in polar coordinates $(r, \omega)$.  In
particular, the Lagrangian becomes $h = (\rho^{2} + \theta^{2}/r^{2} +
V)/2$ where $\rho$ is dual to $r$ and $\theta$ is dual to $\omega$.  In
the notation of the paragraph preceding Lemma 15.3 in \cite{HW2008}, one checks that
for a bicharacteristic of the form $t  \omega + \eta'$ where
$\eta' \perp \omega$, one has
$\mu = \eta' / t + O(1/t^{2})$ as $t \to \infty$. This yields $M = \eta'$ (NB:  the definition
of $M$ in \cite{HW2008} has a typo; it should be $M := \lim_{t \to \infty}
\mu/x$).  From this we see that \eqref{eq:5} agrees with the definition of the total sojourn relation on \cite[p680]{HW2008}. 
\end{remark}

\begin{figure}
  \centering
  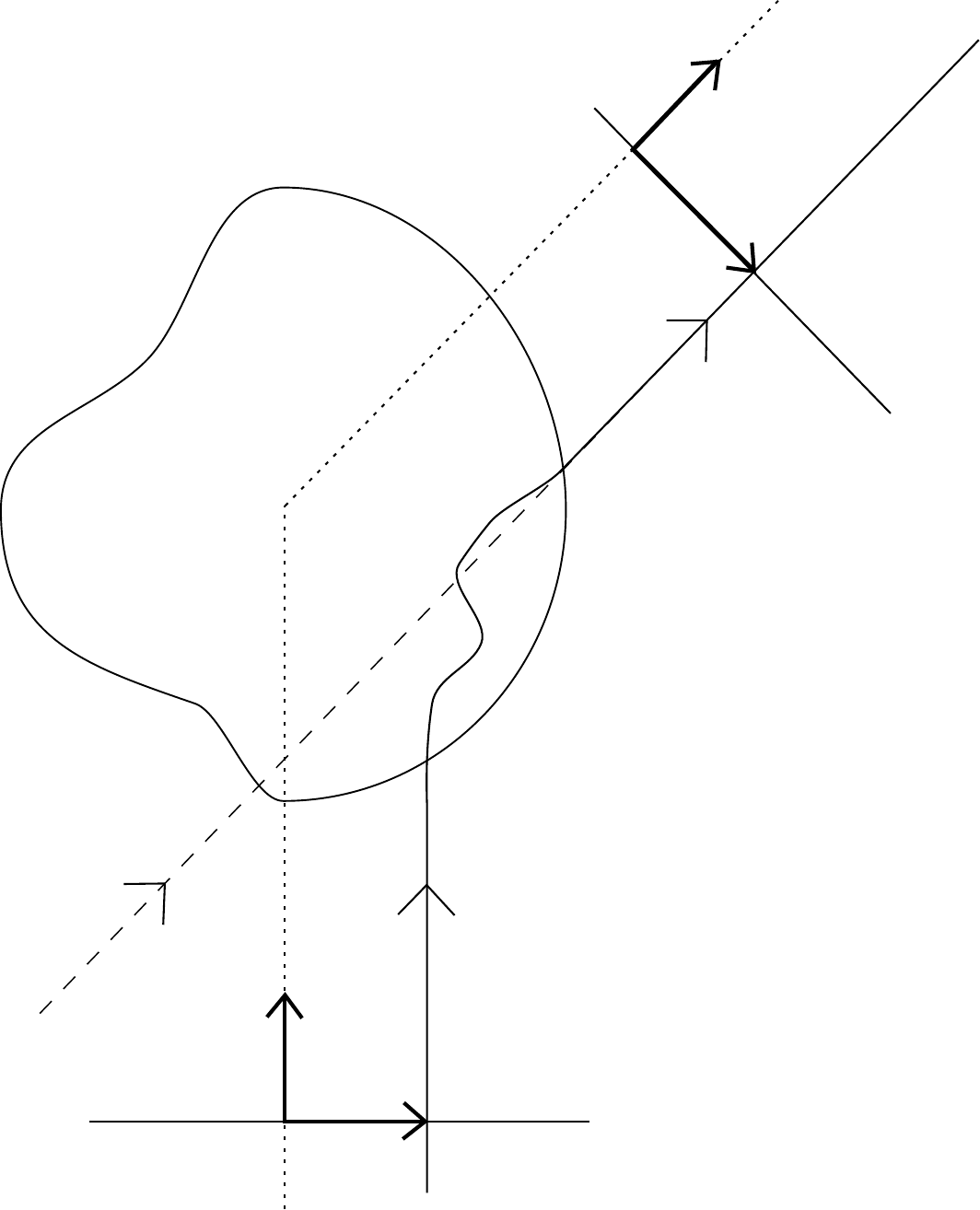
  \caption{The scattering relation.  Here $(\omega, \eta)$ lies in the
    interaction region $\mathcal{I}$.  The long-dashed line depicts how
  the outgoing data $(\omega', \eta')$ is also in $\mathcal{I}$.}
  \label{fig:widetildeL}
\end{figure}


\section{Semiclassical scattering matrix}\label{sec:scatteringmatrix}
In this section we collect some information about the semiclassical scattering matrix needed in the proof of Theorem~\ref{thm:prethm}. 

 As described in \cite{DGHH2013} and proven in \cite{A2005} and
  \cite{HW2008},  the integral kernel of $S_{h}$ can be decomposed as
 \begin{equation}\label{eq:decomposition}
S_{h}
  = K_{1} + K_{2} + K_{3}
\end{equation}
with the $K_{i}$ as follows.

  First, $K_{1}$ is a semiclassical Fourier Integral Operator with
  compact microsupport.  This means that 
  its kernel is a Lagrangian distribution equal to a finite sum of terms of
  the form
  \begin{equation}
    \label{eq:scfiopart}
    h^{-(d-1)/2 -N/2}\int_{\mathbb{R}^{N}} e^{i \Phi(\omega, \omega',
      v)/h} a(\omega,\omega', v, h) dv,
  \end{equation}
where $a$ is a smooth, compactly supported function on
$\mathbb{S}^{d-1} \times\mathbb{S}^{d-1} \times \mathbb{R}^{N}_{v} \times
[0, h_{0})_{h}$, and $\Phi$ is a smooth phase function which parametrizes the
sojourn relation $L$ locally.  We describe briefly what it means to
parametrize $L$ locally; on the critical locus
\begin{equation}
  \label{eq:criticalset}
  \mbox{Crit}(\Phi) = \set{ (\omega, \omega',
      v) ; D_{v}\Phi(\omega, \omega',
      v) = 0}
\end{equation}
the Hessian $D_{\omega, \omega', v}\Phi$ has full
rank and the map
\begin{equation}
  \label{eq:parametrization}
  \begin{split}
    \mbox{Crit}(\Phi) &\lra T^{*}\mathbb{S}^{d-1} \times
    T^{*}\mathbb{S}^{d-1} \times \mathbb{R} \\
    (\omega, \omega', v) &\longmapsto (\omega, D_{\omega}(\Phi),
    \omega', D_{\omega'}(\Phi), \Phi),
  \end{split}
\end{equation}
restricted to the complement of the set $\set{ (\omega, \omega',
  v, h) : a(\omega, \omega',
  v, h) = O(h^{\infty})}$ is a diffeomorphism between $\Crit(\Phi)$
and an open subset of $L$.  (Here and below $O(h^{\infty})$ denotes a quantity that is bounded by
$C_{N}h^{N}$ for each $N > 0$ and $h$ sufficiently small.)
The microsupport condition on $K_{1}$ means that the amplitudes $a$ in
\eqref{eq:scfiopart} satisfy
  \begin{equation}
    \label{eq:10}
    a(\omega, \omega', v, h) = O(h^{\infty}) \mbox{ on those }
    (\omega, \omega', v) \mbox{ mapped via \eqref{eq:parametrization}
      into } \mathcal{I}_{2\epsilon}^{c}.
  \end{equation}
In particular,  the canonical
relation of $K_{1}$ is given by the projected sojourn relation
\begin{equation*}
  C := \set{(\omega, \eta, \omega', -\eta') : \mathcal{S}((\omega,
    \eta) = (\omega', \eta')},
\end{equation*}
This is just the projection of the sojourn relation $L$ off the $\mathbb{R}$
factor.  (For more details about semiclassical FIOs see e.g.\ \cite{Zw2012}, \cite{D1974}.)

The $K_{2}$ term is a semiclassical pseudodifferential operator with
microsupport disjoint from $\mathcal{I}$, say outside 
\begin{equation}
\{ |\eta| \leq R_* \} \mbox{ where } \supp V \subset B_{R^{*}},
\label{Rstar}\end{equation}
$B_{R^{*}}$ being the ball of radius $R^{*}$ centered at the origin.

(Though it will not be used directly, we mention that $K_{2}$ is
microlocally equal to the identity outside a compact set in phase space.
Specifically, it
is a sum of terms of the form
\begin{equation}
  \label{eq:pseudopart}
  (2\pi h)^{-(d-1)} \int e^{i(z - z')\cdot \zeta / h} b(z, 
  \zeta, h) d\zeta
\end{equation}
in local coordinates, where
  \begin{equation}
    \label{eq:6}
    b(z, \zeta, h) = 1 + O(h^{\infty}) \mbox{ for } \absv{\zeta} >
    R^{*} + \delta_{0},
  \end{equation}
for any $\delta_{0}$.  Instead of using \eqref{eq:6} we will use the
exponential bounds in Lemma \ref{thm:veryclosetozero} below.)
 
Finally, the $K_{3}$ term is a smooth function on
$\mathbb{S}^{d-1} \times \mathbb{S}^{d-1} \times [0,
h_{0})_{h}$ satisfying
\begin{equation}\label{eq:K3}
K_{3} = O(h^{\infty}).
\end{equation}

Moreover, by \cite[Lemma 3.1]{DGHH2013} the Maslov line
bundle of the canonical relation $C$ is canonically trivial, and with respect to this
trivialization, the principal symbol of $S_{h}$
is given in terms of the canonical half-density $\absv{d\omega
  d\eta}^{1/2}$ on $\mathcal{M}$ by
\begin{equation}
  \label{eq:principalsymbol}
  \sigma(S_{h}) =  \absv{d\omega
  d\eta}^{1/2}.
\end{equation}
In particular, 
\begin{equation}
  \label{eq:principalsymbolisId}
  \sigma(S_{h} - \Id) = 0 \mbox{ on } T^{*}\mathbb{S}^{d-1} - \mathcal{I}.
\end{equation}


\section{Trace formula}

Theorem \ref{thm:mainthm} below (and thus Theorem \ref{thm:prethm})
will be proven using the following trace formula.

\begin{proposition}\label{thm:traceone}
  Assume that $V$ is non-trapping at energy $1$ and that $H$ in
  \eqref{eq:operator} satisfies Assumption \ref{thm:dynass}. For each polynomial
  $p$ on $\mathbb{C}$
satisfying
  \begin{equation}\label{eq:zero}
p(1) = 0,
\end{equation}
we have
  \begin{equation}
    \label{eq:traceformula}
    \Tr p(S_{h}) = \frac{\Vol(\mathcal{I})}{(2 \pi h)^{d-1}} \frac{1}{2 \pi}
    \oint_{\mathbb{S}^{1}} p(e^{i\phi}) d\phi  + o(h^{-(d-1)}),
  \end{equation}
where $\mathcal{I} \subset T^{*}\mathbb{S}^{d-1}$ is the interaction
region \eqref{eq:interaction}.
\end{proposition}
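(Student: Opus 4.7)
The plan is to reduce by linearity to $p(z) = z^l - 1$ for $l \in \mathbb{Z} \setminus \{0\}$, since any polynomial vanishing at $1$ expands as a finite linear combination of these. For such $p$ the target becomes $\Tr(S_h^l - \Id) = -\Vol(\mathcal{I})/(2\pi h)^{d-1} + o(h^{-(d-1)})$, matching \eqref{eq:traceformula} via $\frac{1}{2\pi}\oint_{\mathbb{S}^1} (e^{il\phi} - 1)\,d\phi = -1$. Using the decomposition \eqref{eq:decomposition} — where $K_1$ has compact microsupport, $K_2 - \Id$ is microsupported in $\{|\eta| \leq R^* + \delta_0\}$, and $K_3 = O(h^\infty)$ — I choose a cutoff $\chi \in C_c^\infty(T^*\mathbb{S}^{d-1})$ equal to $1$ on a compact set containing $\mathcal{I}$ and those microsupports. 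Since $S_h$ is microlocally the identity off $\supp\chi$, FIO/$\Psi$DO composition gives $S_h^l - \Id = \Op(\chi)(S_h^l - \Id)\Op(\chi) + O(h^\infty)$ in trace norm, so
\begin{equation*}
\Tr(S_h^l - \Id) = \Tr(\Op(\chi) S_h^l \Op(\chi)) - \Tr(\Op(\chi)^2) + O(h^\infty).
\end{equation*}

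Next I apply the semiclassical FIO trace formula (stationary phase) to the compactly-microsupported FIO $\Op(\chi) S_h^l \Op(\chi)$, whose canonical relation is contained in the graph of $\mathcal{S}^l$ with principal symbol $\chi^2 |d\omega\,d\eta|^{1/2}$ on that graph, computed from \eqref{eq:principalsymbol} via the composition calculus. The $h^{-(d-1)}$ contribution localizes at the fixed-point set of $\mathcal{S}^l$, which splits as $(\supp\chi \cap \mathcal{I}^c) \sqcup \mathcal{F}_l$: on $\mathcal{I}^c$ the map $\mathcal{S}$ is the identity so every point is fixed (a clean, top-dimensional fixed set), yielding a contribution $(2\pi h)^{-(d-1)} \int_{\supp\chi \cap \mathcal{I}^c} \chi^2\,d\omega\,d\eta + O(h^{-(d-2)})$, while $\mathcal{F}_l \subset \mathcal{I}$ has Liouville measure zero by Assumption \ref{thm:dynass}. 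Comparing against the Weyl-law value $\Tr(\Op(\chi)^2) = (2\pi h)^{-(d-1)} \int \chi^2\,d\omega\,d\eta + O(h^{-(d-2)})$ (cf.\ \eqref{eq:numberpseudo}) and using $\chi|_\mathcal{I} = 1$, the top-order terms cancel everywhere except over $\mathcal{I}$, producing the desired deficit $-(2\pi h)^{-(d-1)} \Vol(\mathcal{I})$, plus an $\mathcal{F}_l$-contribution to be controlled.

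The main obstacle will be showing that the $\mathcal{F}_l$-contribution to $\Tr(\Op(\chi) S_h^l \Op(\chi))$ is $o(h^{-(d-1)})$, since the standard FIO trace formula presupposes a non-degenerate or clean fixed set while Assumption \ref{thm:dynass} only gives measure zero. The plan is, for each $\delta > 0$, to cover $\mathcal{F}_l$ by an open set $U_\delta \subset \mathcal{I}$ with $\Vol(U_\delta) < \delta$ and smoothly partition the cutoff accordingly. Away from $U_\delta$ the phase parametrizing the trace is non-stationary (the diagonal is transverse to the graph of $\mathcal{S}^l$ there), so repeated integration by parts yields an $O(h^\infty)$ contribution; on $U_\delta$ a crude $L^\infty$ bound on the FIO kernel weighted by $\Vol(U_\delta) < \delta$ gives a contribution bounded by $\delta \cdot O(h^{-(d-1)})$. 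Sending $\delta \to 0$ after $h \to 0$ then yields the required $o(h^{-(d-1)})$ bound, with uniformity of the non-stationary estimate as the cutoffs steepen being the main technical subtlety.
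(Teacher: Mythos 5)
Your high-level architecture mirrors the paper's: reduce by linearity to $p(z)=z^{l}-1$, localize the trace to a compact region of phase space, show the complementary "far'' part is negligible, and apply a semiclassical FIO trace formula (essentially the paper's Proposition~\ref{prop:tr}) exploiting that the fixed locus of $\mathcal{S}^{l}$ is $\mathcal{I}^{c}$ together with a measure-zero set $\mathcal{F}_{l}$. Your handling of $\mathcal F_l$ by covering it with a small-measure open set and using non-stationary phase off that set is essentially Case~3 of the paper's proof of Proposition~\ref{prop:tr}, so that part is fine (modulo the subtlety you already flag about uniformity).

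The genuine gap is the step
\[
S_{h}^{l}-\Id \;=\; \Op(\chi)\,(S_{h}^{l}-\Id)\,\Op(\chi) \;+\; O(h^{\infty})\ \text{in trace norm},
\]
which you dismiss as ``FIO/$\Psi$DO composition.'' Symbolic calculus gives $O(h^{\infty})$ \emph{operator-norm} control per frequency band, but the trace sums over infinitely many spherical harmonics $\phi_{lm}$ with multiplicities $d_{l}\sim l^{d-2}$, and a bound like $\norm{(S_{h}-\Id)\phi_{lm}}\le C_{N}h^{N}$ uniformly in $l$ does \emph{not} give a convergent — let alone $O(h^{\infty})$ — sum of $\sum_{l\gtrsim R/h} d_{l}\,\norm{(S_{h}-\Id)\phi_{lm}}$. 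The paper deliberately avoids the $K_{2}$-symbol route here (remarking ``Instead of using \eqref{eq:6} we will use the exponential bounds in Lemma~\ref{thm:veryclosetozero}''), and instead proves Lemma~\ref{thm:veryclosetozero}, which gives \emph{exponential} decay $\norm{(S_{h}-\Id)\phi_{l}}\le Ce^{-cl}$ for $l\ge R'/h$ via Bessel asymptotics and a resolvent estimate. That exponential rate is exactly what beats the polynomial growth of $d_{l}$ and makes the far part $O(h^{\infty})$, and it is also what underlies the uniform bound \eqref{eq:estimate} used later. Your proposal has no substitute for this estimate; without it, the localization step is unjustified (and the composition issue is compounded for $S_{h}^{l}$ with $l>1$, which you also do not address — the paper closes this by observing that $S_{h}^{k}=K_{1}'+K_{2}'+K_{3}'$ retains the structural properties of the decomposition, with $K_{1}'$ associated to the graph of $\mathcal{S}^{k}$).

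Two smaller points: the localizer the paper actually uses is a single $A_{h}=\rho(h^{2}\Delta_{\mathbb{S}^{d-1}})$ inserted on one side only, writing $\Tr(S_{h}-\Id)=\Tr A_{h}(S_{h}-\Id)+\Tr(\Id-A_{h})(S_{h}-\Id)$; sandwiching on both sides as you do is harmless but you then need to control two cross terms rather than one. Also, the paper's trace formula carries a phase factor $e^{i\tau/h}$ on the fixed set; on $\mathcal{I}^{c}$ the sojourn time vanishes so this factor is $1$, which is why the contribution exactly matches the Weyl term — a point worth stating explicitly if you subtract $\Tr\Op(\chi)^{2}$ from $\Tr\Op(\chi)S_{h}^{l}\Op(\chi)$ as you propose.
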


In preparation for the proof of this proposition, we prove an estimate on the operator $S_h - \Id$. 

\begin{lemma}\label{thm:veryclosetozero}
  Let $R > 0$ be such that
  \begin{equation}\label{eq:bigball}
    \supp V \subset B_{R}, 
  \end{equation}
  where $B_{R}$ is the open ball $\set{ \absv{x} < R}$, and choose $R' > R$.  There exist 
  constants $c, C > 0$ depending on $R'$ and $V$ so that for each spherical harmonic $\phi_{l}$
  satisfying $(\Delta_{\mathbb{S}^{d-1}}  - l(l + n - 2))\phi_{l} = 0$
  with $l h \geq R'$, and for $h$ small enough, 
we have
\begin{equation}
  \label{eq:almosteigenfunction}
  \norm[L^{2}]{(S_{h} - 1)\phi_{l}} \le C e^{-cl} \norm[L^{2}]{\phi_{l}}.
\end{equation}
\end{lemma}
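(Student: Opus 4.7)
The physical intuition is a centrifugal barrier: for a spherical harmonic $\phi_{l}$ with $lh \geq R'$, the effective radial term $h^{2}l(l+d-2)/r^{2}$ exceeds the energy $E = 1$ throughout $\supp V \subset B_{R}$, so the generalized eigenfunction of $H$ with incoming data $\phi_{l}$ sits in the classically forbidden region inside the potential and only interacts with $V$ exponentially weakly. I would quantify this through the Lippmann--Schwinger equation. Let $u_{0}$ be the generalized eigenfunction of the free operator $h^{2}\Delta - 1$ with incoming data $\phi_{l}$, so that the free scattering matrix acts as the identity on $\phi_{l}$, and let $u$ be the generalized eigenfunction of $H = h^{2}\Delta + V - 1$ with the same incoming data. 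These are related by $u = u_{0} - R_{V}(1+i0)(V u_{0})$, where $R_{V}$ is the outgoing resolvent of $H$; equivalently $u - u_{0} = -R_{0}(1+i0)(Vu)$, with $R_{0}$ the free outgoing resolvent. The key remark is that $(S_{h} - \Id)\phi_{l}$ is precisely the outgoing scattering data of $u - u_{0}$.

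The free solution $u_{0}$ is explicit:
\begin{equation*}
u_{0}(r\omega) = c_{\nu,h}\, r^{-(d-2)/2} J_{\nu}(r/h)\, \phi_{l}(\omega), \qquad \nu = l + (d-2)/2,
\end{equation*}
with $|c_{\nu,h}| \sim h^{-1/2}$ fixed by matching the prescribed incoming asymptotic in \eqref{eq:generalizedEfn}. For $\nu \geq R'/h$ and $r \leq R$ we have $x/\nu := (r/h)/\nu \leq R/R' < 1$, which is the classically forbidden regime for $J_{\nu}$. The Debye asymptotic expansion provides the uniform estimate
\begin{equation*}
|J_{\nu}(x)| \leq C \exp\!\bigl(-\nu\, \phi(x/\nu)\bigr), \qquad \phi(t) := \log\!\Big(\tfrac{1 + \sqrt{1-t^{2}}}{t}\Big) - \sqrt{1-t^{2}},
\end{equation*}
valid for $0 < x/\nu \leq R/R'$, where $\phi$ is positive and bounded below by $c_{0} := \phi(R/R') > 0$. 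Thus $|J_{\nu}(r/h)| \leq C e^{-c_{0} l}$ uniformly on $B_{R}$, whence $\|u_{0}\|_{L^{2}(B_{R})} \leq C h^{-1/2} e^{-c_{0} l} \|\phi_{l}\|_{L^{2}}$.

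Next I would invoke the standard limiting absorption estimate $\|\chi R_{V}(1+i0)\chi\|_{L^{2}\to L^{2}} \leq C/h$ for compactly supported cutoffs $\chi$, valid under the non-trapping hypothesis at energy $1$. Combined with the previous bound, the Lippmann--Schwinger identity yields $\|u\|_{L^{2}(B_{R})} \leq C h^{-3/2} e^{-c_{0} l} \|\phi_{l}\|_{L^{2}}$, and hence $\|Vu\|_{L^{1}(B_{R})} \leq C h^{-3/2} e^{-c_{0} l} \|\phi_{l}\|_{L^{2}}$. Inserting the large-$|x|$ expansion of the semiclassical free outgoing Green's function into $u - u_{0} = -R_{0}(Vu)$ identifies the outgoing data of $u - u_{0}$ with a rescaled Fourier transform of $Vu$ restricted to the unit sphere, whose $L^{2}(\mathbb{S}^{d-1})$ norm is bounded by $C h^{-(d-1)/2}\|Vu\|_{L^{1}}$. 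Altogether,
\begin{equation*}
\|(S_{h} - \Id)\phi_{l}\|_{L^{2}(\mathbb{S}^{d-1})} \leq C h^{-(d+2)/2} e^{-c_{0} l}\|\phi_{l}\|_{L^{2}}.
\end{equation*}
Since $l \geq R'/h$, any fixed negative power of $h$ is dominated by $e^{c_{0}l/2}$ for $h$ sufficiently small, so the estimate holds with $c = c_{0}/2$.

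The main technical obstacle is the uniform Bessel bound in the coupled regime $\nu \gtrsim 1/h$, $r/h \leq R/h$: although the Debye expansion supplies this, the constants and the $\nu$-dependent normalization $c_{\nu,h}$ must be tracked carefully. The non-trapping resolvent estimate and the far-field identification of the Born amplitude via the outgoing Green's function are classical ingredients but should be cited precisely.
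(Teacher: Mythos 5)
Your proposal is correct and shares the paper's central mechanism: the uniform Debye-type exponential bound $|J_\nu(\nu\gamma)| \leq Ce^{-\nu(\alpha-\tanh\alpha)}$ for $\gamma = r/(h\nu) \leq R/R' < 1$, a non-trapping resolvent estimate on a compact set, and absorption of residual polynomial $h$-losses via $l \geq R'/h$. Where you diverge is in extracting and bounding the outgoing data. You use the Lippmann--Schwinger form $u - u_0 = -R_0(1+i0)(Vu)$ and read off the scattering amplitude from the far-field asymptotics of the free outgoing Green's function, obtaining a Born-type integral controlled by $h^{-(d-1)/2}\|Vu\|_{L^1}$, which in turn requires the companion estimate on $\|u\|_{L^2(B_R)}$ derived from $u - u_0 = -R_{h,V}(1+i0)(Vu_0)$. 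The paper instead works with $u - u_0 = -R_{h,V}(1+i0)(Vu_0)$ throughout and extracts the outgoing data abstractly via the adjoint Poisson operator $P^*_{h,V}$, using the spectral-measure identity $R_{h,V}(1+i0) - R_{h,V}(1-i0) = (i/2h)P_{h,V}P^*_{h,V}$ together with the Vasy--Zworski bound to get $\|P^*_{h,V}\chi\|_{L^2\to L^2} \leq C$ uniformly in $h$, so that $\|(S_h-\Id)\phi_l\| = h^{-1}\|P^*_{h,V}(Vu_{l,h})\|$ loses only a single power of $h$. Your route is more elementary and makes the Born approximation visible, at the cost of tracking free Green's function kernel asymptotics explicitly and accumulating a few more powers of $h$ (harmless, as both routes absorb them into the exponential); the paper's route is more operator-theoretic and compact. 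One small point in your favour: the free solution must indeed take the form $u_0 = c_{\nu,h}\,r^{-(d-2)/2}J_\nu(r/h)\phi_l$ with $|c_{\nu,h}|\sim h^{-1/2}$ to solve $(h^2\Delta-1)u_0 = 0$ with incoming data exactly $\phi_l$; the paper's display \eqref{eq:almostgeneralized} omits the $r^{-(d-2)/2}$ and the normalization constant, though this is immaterial for the estimate since only the support of $V$ and the exponential Bessel bound enter.
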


\begin{proof}
  Consider the function
  \begin{equation}\label{eq:almostgeneralized}
u_{l, h} := J_{l + (d-2)/2}(r/h)\phi_{l}
\end{equation}
on
  $\mathbb{R}^{d}$ where $\phi_{l}$ is a spherical harmonic with
  angular momentum $l$ and $J_{\sigma}(\zeta)$ is the standard Bessel
  function of order $\sigma$ defined in \cite[Chapter 9]{AS1964}.
  Then $u_{l, h}$ is in the kernel of $h^{2}\Delta -
  1$ and has incoming data equal to $\phi_{l}$ in the sense of
  \eqref{eq:generalizedEfn}. Therefore, $f_{l,h}$ defined by $\lp h^{2}\Delta + V -  1\rp u_{l, h}$ is given by 
  \begin{equation}
    \label{eq:approxgeneralized}
    f_{l,h} = Vu_{l, h}  = V J_{l + (d-2)/2}(r/h)\phi_{l}. 
  \end{equation}
Writing
\begin{equation}\label{eq:changeofvars}
\nu := l +
(d-2)/2 \quad \mbox{ and } \quad  \gamma:= r/(h\nu),
\end{equation}
we can write 
$$
 f_{l,h} = V J_\nu(\nu \gamma) \phi_{l},
 $$
 where $f_{l,h} = 0$ unless $\gamma \leq R/R'$, since we must have $r \leq R$ due to the $V$ factor and $\nu \geq l \geq R'/h$ by assumption. By \cite[9.3.7]{AS1964}, we have 
$$
J_\nu(\nu \gamma) \leq C e^{-\nu(\alpha - \tanh \alpha)}{\nu \tanh \alpha}, \quad \gamma = \operatorname{sech} \alpha.
$$
Therefore, since $\alpha - \tanh \alpha \geq c > 0$ when $\gamma \leq R/R'$, we have 
\begin{equation}\label{eq:greatbound}
\| f_{l,h} \|_{L^2} \leq C e^{-cl}.
\end{equation}

Now consider the outgoing resolvent $R_{h,V}(1 + i0) := (h^{2}\Delta + V - (1 +
i0))^{-1}$, which satisfies $H (h^{2}\Delta
+ V - (1 + i0))^{-1} v = v$ for $v \in
C^{\infty}_{comp}(\mathbb{R}^{d})$.  The main properties of
$R_{h,V}(1 + i0)$ for fixed $h$ can be found in \cite{M1994}.  In particular, for $v \in
C^{\infty}_{comp}(\mathbb{R}^{d})$ and $h$ fixed, 
\begin{equation}
  \label{eq:outgoingresolvent}
  R_{h,V}(1 + i0) v = r^{-(d-1)/2} e^{ir/h} \psi + O(r^{-(d+1)/2})
\end{equation}
 for some $\psi \in C^{\infty}(\mathbb{S}^{d-1})$.  We refer to $\psi$
as the \textbf{outgoing data} of $R_{h,V}(1 + i0) v$.
The function
\begin{equation}
  \label{eq:generalizedforV}
  u_{l, h} - R_{h,V}(1 + i0)(h^{2} \Delta + V - 1)u_{l, h}
\end{equation}
is the unique generalized eigenfunction of  $h^{2}\Delta + V$ of energy $1$ with  incoming data
$\phi_{l}$ in the sense of \eqref{eq:generalizedEfn}.  Thus by the
definition of the scattering matrix in \eqref{eq:scmatrixdef}, $S_{h}(\phi_{l})$ is equal to $\phi_{l}$
minus the outgoing data of $R_{h,V}(1 + i0)(h^{2} \Delta + V - 1)u_{l,
  h}$, up to composition with unitary maps.  Precisely, if $\psi_{l,h}$ is the outgoing data of
$R_{h,V}(1 + i0)(h^{2} \Delta + V - 1)u_{l, h}$ in the sense of
\eqref{eq:outgoingresolvent}, then 
\begin{equation}
  \label{eq:scatteringonsphericalharm}
  S_{h}\phi_{l}(\omega) = \phi_{l}(\omega) - e^{i\pi(d-1)/2}\psi_{l,h}(-\omega)
\end{equation}
The outgoing data $\psi_{l,h}$ is computed using the adjoint of the Poisson operator
$P_{h, V}$ for
$h^{2}\Delta + V - 1$.  It is a straightforward exercise to show that
the adjoint operator $P^{*}_{h, V}$ satisfies
$P^{*}_{h,V}f_{l,h} = (-2ih) e^{-i\pi(d-1)/2} S_{h}^{*}
\mathcal{A}^{*}\psi_{l,h}$, where $\mathcal{A}^{*}$ is pullback by the
antipodal map of $\mathbb{S}^{d-1}$.  Thus
\begin{equation}
  \label{eq:poisson}
\| (S_h - \Id) \phi_l \|_{L^2} = \frac 1h \| P_{h, V}^* f_{l,h} \|_{L^2}. 
\end{equation}
Given a cutoff function $\chi \colon \mathbb{R}^{d} \lra \mathbb{R}$
satisfying $\chi(x) \equiv 1$ for $\absv{x} \le R$ and $\chi(x) \equiv
0$ for $\absv{x} > 2R$, $P^{*}_{h,V}f_{l,h} = P^{*}_{h,V} \chi
f_{l,h}$, and we claim that
\begin{equation}
  \label{eq:finalestimate}
\norm[L^{2}(\mathbb{R}^{d}) \to L^{2}(\mathbb{R}^{d})]{\chi P_{h,V}
  P^{*}_{h,V} \chi}  = \norm[L^{2}(\mathbb{R}^{d}) \to
L^{2}(\mathbb{S}^{d-1})]{P^{*}_{h,V} \chi}^{2} \le C. 
\end{equation}
To prove \eqref{eq:finalestimate}, we use the identity $R_{h, V}(1 +
i0) - R_{h, V}(1 -
i0) = (i/2h) P_{h,V} P^{*}_{h,V}$ (see e.g.\ \cite[Lemma
5.1]{HV1999}).   The estimate in
\eqref{eq:finalestimate} follows from the estimate on
$\chi R_{h, V}(1 \pm
i0) \chi$ given in the main theorem of \cite{VZ2000}, where the estimates
are in terms of weighted Sobolev spaces, the weights of which are
irrelevant thanks to the cutoff $\chi$.

Thus by \eqref{eq:greatbound}, $\norm[L^{2}]{(S_{h} - \Id) \phi_{l}}
\le  C e^{-cl} \norm[L^{2}]{\phi_{l}}$ for some $c > 0$ in the region
$l \geq R'/h$ and the lemma follows.

\end{proof}


\begin{proof}[Proof of Proposition \ref{thm:traceone}]
  
  For the sake of clarity, we first prove the trace formula \eqref{eq:traceformula} for
\begin{equation}\label{eq:simplepoly}
p(z) = z - 1,
\end{equation}
i.e.\
we analyze $S_{h} - \Id,$ which is indeed a trace class operator \cite{Yafaev}. 

We choose a pseudodifferential operator $A_h \in \Psi^{0,
  \infty}(S^{n-1})$ that is microlocally equal to the identity on a
neighbourhood of $\mathcal{I}$, say the set $\{ |\eta| \leq R \}$, and
is microsupported inside $\{ |\eta| \leq R_* \}$, where $R_*$ is as in
\eqref{Rstar}. To be precise, we choose $A = \rho(h^2
\Delta_{S^{n-1}})$, where $\rho(t) = 1$ for $t \leq \sqrt R$ and $0$
for $t \geq \sqrt{R_*}$. 

Then $\Tr (S_h - \Id) = \Tr A_h (S_h - \Id) + \Tr (\Id - A_h)(S_h - \Id)$. We analyze the two parts separately. 

First consider $\Tr (\Id - A_h)(S_h - \Id)$. This can be calculated using the orthonormal basis of spherical harmonics on $S^{n-1}$. With $A_h = \rho(h^2 \Delta_{S^{n-1}})$ as above, we have 
$$
\Tr (\Id - A_h)(S_h - \Id) = \sum_{lm} \langle \phi_{lm},  (\Id -
A_h)(S_h - \Id) \phi_{lm} \rangle = \sum_{lm} \langle (\Id - A_h)
\phi_{lm} , (S_h - \Id) \phi_{lm} \rangle. 
$$
Here $\phi_{lm}$ satisfies $\Delta_{\mathbb{S}^{d-1}} \phi_{lm} = l(l+d-2)
\phi_{lm}$ and for fixed $l$, the index $m$ ranges from $1$ to $d_l$,
where $d_{l}$ is the multiplicity
\begin{equation}\label{eq:multiplicity}
d_l = \dim \ker (\Delta_{\mathbb{S}^{d-1}} - l(l+d-2)) = O(l^{d - 2}).
\end{equation}
The latter bound can be found for example in \cite{taylor:vol2}.
Then $(\Id - A_h) \phi_{lm} = 0$ unless $l \geq R/h$. Moreover, we see
from Lemma~\ref{thm:veryclosetozero} that $\| (S_h - \Id) \phi_{lm} \|
\leq C e^{-cl}$ when $l \ge R/h$. It follows that we get an estimate 
\begin{equation}\label{eq:nearinfinity}
  \begin{split}
    \Big| \Tr (\Id - A_h)(S_h - \Id) \Big| &\leq C \sum_{l \geq R/h}
    d_l e^{-cl} \le C' \sum_{l \geq R/h} e^{-c'l} \le C'' e^{Rc'/h} (= O(h^{\infty})).
  \end{split}
\end{equation}

We now turn to computing $\Tr A_h (S_h - \Id)$. We decompose $S_h =
K_1 + K_2 + K_3$ as in Section \ref{sec:scatteringmatrix}. Recall the
formula for the trace in terms given by integrating the kernel over
the diagonal \cite[Appendix C]{Zw2012}.  Consider the operator $A_h K_3$. We have already noted that $K_3$ is a smooth
function that is $O(h^\infty)$; on the other hand, $A_h$ is a semiclassical pseudo of order $0$ and compact microsupport, so its kernel is $O(h^{-(d-1)})$. Composing, we see that the kernel of $A_h K_3$ is $O(h^\infty)$, so its trace is also $O(h^\infty)$.

 Since $A_h$ and $K_2$ are microsupported on
disjoint sets, the product $A_h K_2$ has Schwartz kernel smooth and
$O(h^\infty)$, so as for $K_{3}$ we conclude that $\Tr(A_{h}K_{2}) = O(h^\infty)$. 

It remains to compute $\Tr A_h(K_1 - \Id)$. For that we use Proposition~\ref{prop:tr}. Applying \eqref{T-tr-formula} twice, we get
\begin{equation}\begin{gathered}
\Tr A_h K_1 = (2\pi h)^{-(d-1)} \int_{\{ |\eta| \leq R_* \} \setminus
  \mathcal{I}} \rho(|\eta|^2) \, d\omega \, d\eta + o(h^{-(d-1)})\\
\Tr A_h  = (2\pi h)^{-(d-1)} \int_{ \{ |\eta| \leq R_* \}}
\rho(|\eta|^2) \, d\omega \, d\eta + o(h^{-(d-1)})
\end{gathered}\end{equation}
since the fixed point set of $K_1$ is given by $\mathcal{I}^c \cup \mathcal{F}_1$, and by hypothesis, $\mathcal{F}_1$ has measure zero. Subtracting, we find
\begin{equation}
\Tr A_h(K_1 - \Id) = - (2\pi h)^{-(d-1)} \int_{\mathcal{I}}
\rho(|\eta|^2) \, d\omega \, d\eta  = (2\pi h)^{-(d-1)}
\Vol(\mathcal{I}) + o(h^{-(d-1)}),
\end{equation}
This proves the trace formula \eqref{eq:traceformula} for the
polynomial $z -1$.

Finally, consider an arbitrary polynomial $p(z)$ with $p(1) = 0$.  On
the circle $\absv{z} = 1$, since $\overline{z} = z^{-1}$, $p$ can be
written $p(z) = \sum_{- d \le k \le d} a_{k} z^{k}$ for $a_{k} \in
\mathbb{C}$.  If we define $p_{k}(z) = z^{k} - 1$, then $p(z) =
\sum_{0 < \absv{k} \le d} a_{k} p_{k}(z)$.  (Proof: $p(z) -
\sum_{0 < \absv{k} \le d} a_{k} p_{k}(z)$ is constant and vanishes at
$1$.)  By the linearity of the trace, it suffices to prove that
for all $k \in \mathbb{Z}$ with $k \neq 0$,
\begin{equation}
  \label{eq:generalpoly}
  \Tr(S_{h}^{k} - 1) = - \frac{1}{(2\pi h)^{d-1}} \Vol(\mathcal{I}) +
  o(h^{-(d - 1)}).
\end{equation}
This will follow exactly as the case for $k = 1$ if we can show that
 $S_{h}^{k} = K_{1}' + K_{2}' + K_{3}'$ where the $K_{i}'$ have all of
 the same properties as the $K_{i}$ from \eqref{eq:decomposition} with
 the only modification being that $K_{1}'$, still a semiclassical FIO with
microsupport in $\mathcal{I}_{2\epsilon}$, now has canonical relation
\begin{equation*}
  C_{k} := \set{ (\omega, \eta, \omega', -\eta') : \mathcal{S}^{k}(\omega,
    \eta) = (\omega', \eta')}.
\end{equation*}
where $\mathcal{S}$ is the sojourn map \eqref{eq:sojourn}.  All of the relevant properties follow by applying the standard
composition theorems for semiclassical FIOs and for semiclassical
pseudodifferential operators (see e.g.\ \cite{Zw2012}, \cite[Section
8.2.1]{GS2010}) to $S_{h}^{k} =
(K_{1} + K_{2} + K_{3})^{k}$.  Now all the computations in the case of
$z - 1$ work in this case with the sole change that the trace of
$A_h K_{1}'$ is an integral over $\mathcal{F}_{k} \cup \{ |\eta| \leq R_* \} - \mathcal{I})$, but Assumption \ref{thm:dynass}
implies that only the integral over $T^{*}\mathbb{S}^{d-1} -
\mathcal{I}$ contributes, as in the $z - 1$ case.


\end{proof}


\section{Equidistribution}\label{sec:weightedequi}

We now state a strengthened version of Theorem \ref{thm:prethm}.
We define the norm
\begin{equation}
  \label{eq:weightednorm}
  \norm[w]{f} = \sup_{\absv{z} = 1, z \neq 1} \absv{\frac{f(z)}{z -
      1}} \mbox{ for } f \colon \mathbb{S}^{1} \lra \mathbb{C}.
\end{equation}
and let
\begin{equation}\label{eq:Cw}
C_{w}^{0}(\mathbb{S}^{1}) := \set{ f \in C^{0}(\mathbb{S}^{1}) : f /
  (z - 1) \mbox{ is continuous} }.
\end{equation}
Then $C_{w}^{0}(\mathbb{S}^{1})$ with the norm $\norm[w]{\cdot}$
is a Banach space.  (Here $C^{0}(\mathbb{S}^{1})$ is the space of
continuous, complex-valued functions on the unit circle.)  
We will
prove the following.
\begin{theorem}\label{thm:mainthm}
  For any $f \in C^{0}_{w}(\mathbb{S}^{1})$, the integral $\la
  \mu_{h}, f \ra$ defined in \eqref{eq:integralh} is finite for all $h$, and
  \begin{equation}
    \label{eq:equicompactw}
    \lim_{h \to 0} \la \mu_{h}, f \ra = \frac{1}{2\pi} \int_{0}^{2\pi}
    f(e^{i\phi}) d\phi.
  \end{equation}
\end{theorem}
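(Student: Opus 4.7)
The plan is to extend Proposition~\ref{thm:traceone}, which establishes \eqref{eq:equicompactw} whenever $f$ is a Laurent polynomial $p$ with $p(1) = 0$, to arbitrary $f \in C^0_w(\mathbb{S}^1)$ via a standard three-$\epsilon$ argument. The two ingredients needed are (a) density of Laurent polynomials vanishing at $z = 1$ in $(C^0_w(\mathbb{S}^1), \|\cdot\|_w)$, and (b) a uniform bound $|\langle \mu_h, f\rangle| \le C\|f\|_w$ valid for $h \in (0, h_0)$.

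For (a), given $f \in C^0_w$ write $f = (z-1)g$ where $g := f/(z-1) \in C^0(\mathbb{S}^1)$ satisfies $\|g\|_\infty = \|f\|_w$. By Stone--Weierstrass we approximate $g$ uniformly on $\mathbb{S}^1$ by Laurent polynomials $q_N$; then $p_N(z) := (z-1)q_N(z)$ is a Laurent polynomial with $p_N(1) = 0$ and
\[
\|f - p_N\|_w = \sup_{|z|=1,\, z\neq 1} \left| \frac{(z-1)(g - q_N)(z)}{z - 1} \right| = \|g - q_N\|_\infty \xrightarrow{N \to \infty} 0.
\]
Once (a) and (b) are in place, for $\epsilon > 0$ pick $p_N$ with $\|f - p_N\|_w < \epsilon$ and decompose
\[
\langle \mu_h, f \rangle - \tfrac{1}{2\pi} \textstyle\int_0^{2\pi} f(e^{i\phi})\, d\phi = \langle \mu_h, f - p_N\rangle + \Bigl[\langle \mu_h, p_N \rangle - \tfrac{1}{2\pi}\textstyle\int_0^{2\pi} p_N(e^{i\phi})\, d\phi\Bigr] + \tfrac{1}{2\pi}\textstyle\int_0^{2\pi}(p_N - f)(e^{i\phi})\, d\phi;
\]
the first summand is $\le C\epsilon$ by (b), the third is $\le 2\epsilon$ since $\|\cdot\|_\infty \le 2 \|\cdot\|_w$, and the middle term tends to $0$ as $h \to 0$ by Proposition~\ref{thm:traceone}.

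The main obstacle is (b). Since $S_h - \Id$ is normal---direct computation gives $(S_h - \Id)(S_h - \Id)^* = 2\Id - S_h - S_h^* = (S_h - \Id)^*(S_h - \Id)$---we have $\|S_h - \Id\|_1 = \sum_n |e^{i\beta_{h,n}} - 1|$, so combined with $|f(z)| \le \|f\|_w |z - 1|$ the bound (b) reduces to the trace-class estimate
\begin{equation}\label{tracenormestimate}
\|S_h - \Id\|_1 = O(h^{-(d-1)}).
\end{equation}
Applying Proposition~\ref{thm:traceone} directly to $p(z) = 2 - z - z^{-1} = |z-1|^2$ yields only the Hilbert--Schmidt-type bound $\sum_n |e^{i\beta_{h,n}} - 1|^2 = O(h^{-(d-1)})$, which cannot be upgraded to trace-class via Cauchy--Schwarz because the spectrum of $S_h$ generically accumulates at $1$. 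To prove \eqref{tracenormestimate}, I would instead exploit the microlocal decomposition $S_h - \Id = K_1 + (K_2 - \Id) + K_3$ from Section~\ref{sec:scatteringmatrix}: $K_1$ is a semiclassical FIO of order $0$ with microsupport contained in the compact set $\mathcal{I}_{2\epsilon}$; $K_2 - \Id$ is a semiclassical pseudodifferential operator of order $0$ whose symbol $b - 1$ is $O(h^\infty)$ off the compact set $\{|\zeta| \le R^* + \delta_0\}$ by \eqref{eq:6}; and $K_3 = O(h^\infty)$ as a smooth kernel. The standard semiclassical bound $\|A\|_1 \le \|A\|_{\mathrm{op}} \|\chi\|_2^2 = O(h^{-(d-1)})$ for any compactly microsupported operator $A$ of order $0$ on $L^2(\mathbb{S}^{d-1})$---where $\chi$ is a pseudodifferential cutoff equal to the identity near the microsupport of $A$, satisfying $\|\chi\|_2^2 = O(h^{-(d-1)})$ (see \cite{Zw2012})---then yields \eqref{tracenormestimate} by summing over the three pieces.
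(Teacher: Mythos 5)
Your overall architecture is exactly right and matches the paper's: approximate $f \in C^0_w(\mathbb{S}^1)$ by Laurent polynomials vanishing at $1$ (your density argument (a) is literally the paper's Lemma~\ref{lem:dense}), combine this with a uniform bound $|\langle\mu_h,f\rangle| \le C\|f\|_w$, and invoke Proposition~\ref{thm:traceone}. The reduction of the uniform bound to the trace-norm estimate $\|S_h - \Id\|_1 = O(h^{-(d-1)})$ via normality of $S_h - \Id$ and the pointwise bound $|f(z)| \le \|f\|_w|z-1|$ is correct and is a clean way to phrase what the paper's Lemma~\ref{lem:uniform-bd} establishes.

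However, the route you take to \eqref{tracenormestimate} has a gap at the $K_2 - \Id$ piece. Equation \eqref{eq:6} only tells you the symbol $b - 1$ is $O(h^\infty)$ \emph{pointwise} for $|\zeta| > R^* + \delta_0$; it does not say $b - 1$ is compactly supported, nor that the $O(h^\infty)$ tail decays in $\zeta$. To conclude $\|K_2 - \Id\|_1 = O(h^{-(d-1)})$ by the mechanism $\|A\|_1 \le \|A\|_{\mathrm{op}}\|\chi\|_2^2$ you need $A = \chi A \chi$ up to a trace-norm error that is itself controlled, and an $O(h^\infty)$ symbol in $S^0$ that does not decay in $\zeta$ produces an operator that need not even be trace class (a symbol like $h^N g(\zeta)$ with $g$ bounded but non-decaying already fails). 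The paper flags precisely this issue: right after stating \eqref{eq:6}, it says ``Instead of using \eqref{eq:6} we will use the exponential bounds in Lemma~\ref{thm:veryclosetozero} below.'' The paper's actual proof of the uniform bound (Lemma~\ref{lem:uniform-bd}) bypasses the microlocal decomposition entirely: it uses the Bessel-function estimate of Lemma~\ref{thm:veryclosetozero} to show $\|(S_h - \Id)\phi_l\| \le Ce^{-cl}$ for $l \ge R'/h$, converts this into the eigenvalue counting bound \eqref{eq:howmanybadguys}, and then sums $|e^{i\beta_{h,n}} - 1|$ by a dyadic decomposition near $\beta = 0$. To close your argument, you would either need to establish that $b - 1$ is rapidly decreasing in $\zeta$ (and not merely bounded) with $O(h^\infty)$ coefficients, which the cited decomposition does not assert, or replace your microlocal step with the eigenfunction-based estimate of Lemma~\ref{thm:veryclosetozero}.
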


This theorem follows in a straightforward way from the following two lemmas. 

\begin{lemma}\label{lem:uniform-bd} There exists $C > 0$ such that for all sufficiently small $h$, and all $f \in C_{w}^{0}(\mathbb{S}^{1})$, we have 
\begin{equation}
  \label{eq:estimate}
\Big|  \la \mu_{h}, f \ra \Big| \le C \norm[w]{f}.
\end{equation}
\end{lemma}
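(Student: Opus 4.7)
\emph{Reduction to a trace-norm bound.} The plan is to show that $\|S_h - \Id\|_1 = O(h^{-(d-1)})$, where $\|\cdot\|_1$ denotes the Schatten one-norm. This suffices, because the defining property of $\|\cdot\|_w$ gives the pointwise bound $|f(e^{i\beta})| \le \|f\|_w\,|e^{i\beta} - 1|$, and since $S_h$ is unitary, $S_h - \Id$ is normal with eigenvalues $e^{i\beta_{h,n}} - 1$; for a normal operator the singular values are the moduli of the eigenvalues, so $\sum_n |e^{i\beta_{h,n}} - 1| = \|S_h - \Id\|_1$. Combining these,
\[
|\la \mu_h, f\ra| \le \frac{(2\pi h)^{d-1}}{c_V}\, \|f\|_w\, \|S_h - \Id\|_1,
\]
so a bound $\|S_h - \Id\|_1 \le C h^{-(d-1)}$ gives \eqref{eq:estimate}.

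\emph{Microlocal splitting.} To bound the trace norm, I would use the same cutoff $A_h = \rho(h^2 \Delta_{\mathbb{S}^{d-1}})$ introduced in the proof of Proposition~\ref{thm:traceone} and split
\[
S_h - \Id = A_h(S_h - \Id) + (\Id - A_h)(S_h - \Id).
\]
For the first term, since $A_h$ is nonnegative and self-adjoint, direct summation in the spherical-harmonic basis using the multiplicity bound \eqref{eq:multiplicity} gives the Weyl-type asymptotic
\[
\|A_h\|_1 = \Tr A_h = \sum_{l,m} \rho(h^2 l(l+d-2)) = O(h^{-(d-1)}).
\]
Combined with $\|S_h - \Id\|_{\mathrm{op}} \le 2$ (unitarity of $S_h$), the Hölder inequality for Schatten classes yields $\|A_h(S_h - \Id)\|_1 = O(h^{-(d-1)})$.

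\emph{The non-interacting piece.} For $(\Id - A_h)(S_h - \Id)$, I would exploit that $\Id - A_h$ is diagonal in the basis $\{\phi_{lm}\}$ with multiplier $1 - \rho(h^2 l(l+d-2))$, which vanishes for $l \le R/h$. Writing $\|(\Id - A_h)(S_h - \Id)\|_1 = \|(S_h^* - \Id)(\Id - A_h)\|_1$, decomposing the right-hand side as a sum of rank-one operators indexed by the harmonics, and applying the triangle inequality for the trace norm together with $\|(S_h^* - \Id)\phi_{lm}\| = \|(S_h - \Id)\phi_{lm}\|$ (unitarity) yields
\[
\|(\Id - A_h)(S_h - \Id)\|_1 \le \sum_{l \ge R/h,\, m} \|(S_h - \Id)\phi_{lm}\|_{L^2}.
\]
For any $R' > R$, Lemma~\ref{thm:veryclosetozero} controls the tail $l \ge R'/h$ by $\sum_{l \ge R'/h} d_l\, Ce^{-cl} = O(h^\infty)$, while the transitional range $R/h \le l < R'/h$ is estimated trivially using $\|(S_h - \Id)\phi_{lm}\| \le 2$ together with the count $\sum_{R/h \le l < R'/h} d_l = O(h^{-(d-1)})$ from \eqref{eq:multiplicity}. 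Summing the two ranges gives the required bound.

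\emph{Main obstacle.} The only nonroutine point is the conversion of the per-harmonic exponential bound of Lemma~\ref{thm:veryclosetozero} into a \emph{trace-norm} (as opposed to Hilbert--Schmidt) estimate; the rank-one decomposition of $(\Id - A_h)(S_h - \Id)$ along eigenspaces of $\Delta_{\mathbb{S}^{d-1}}$ is precisely what makes this possible. Everything else is a standard combination of Schatten inequalities with the Weyl count for $A_h$.
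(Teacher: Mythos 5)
Your proof is correct, and it takes a genuinely different route from the paper's. The paper never introduces the trace norm: it works directly with the eigenvalues of $S_h$, converting Lemma~\ref{thm:veryclosetozero} into the counting bound \eqref{eq:howmanybadguys} (at most $O(L^{d-1})$ eigenvalues of $S_h - \Id$ exceed $Ce^{-cL}$, via a dimension argument on the span of the high eigenvectors), then splitting $\sum_n f(e^{i\beta_{h,n}})$ into the eigenvalues with $|\beta_{h,n}|\ge\delta$ and a dyadic decomposition of those with $|\beta_{h,n}|<\delta$, on which the per-shell count and the weight $\norm[w]{f}\,|e^{i\beta}-1|$ combine to give a geometric series. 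You instead observe that $\norm[w]{\cdot}$ is engineered precisely so that $|\la\mu_h, f\ra| \le (2\pi h)^{d-1}c_V^{-1}\,\norm[w]{f}\,\|S_h - \Id\|_1$, using normality of $S_h - \Id$ to identify its singular values with $|e^{i\beta_{h,n}}-1|$, and you then bound the trace norm by $O(h^{-(d-1)})$ via the same microlocal cutoff $A_h$ as in Proposition~\ref{thm:traceone}: Schatten--H\"older on $A_h(S_h - \Id)$, and on $(\Id - A_h)(S_h - \Id)$ the adjoint trick plus a rank-one decomposition along the spherical-harmonic eigenbasis of $A_h$ — which, as you note, is exactly what upgrades the per-mode $L^2$ bound of Lemma~\ref{thm:veryclosetozero} to a trace-class estimate (a Hilbert--Schmidt computation alone would not close). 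Both arguments ultimately rest on Lemma~\ref{thm:veryclosetozero}. Your route is more conceptual: it makes the role of the weight transparent, reuses the cutoff from the trace formula, and establishes the independently useful uniform bound $\|S_h - \Id\|_1 = O(h^{-(d-1)})$; the paper's dyadic counting argument is more elementary and avoids Schatten-class machinery. Your explicit treatment of the transitional range $R/h \le l < R'/h$ via the Weyl count is also a careful point, since Lemma~\ref{thm:veryclosetozero} as stated requires $lh \ge R' > R$ while $A_h$ is cut off at $l \sim R/h$.
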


\begin{lemma}\label{lem:dense} The set 
$\mbox{Poly}_{1}(\mathbb{S}^{1})$ of polynomials $p(z)$ with $p(1) = 0$  are dense in $C_{w}^{0}(\mathbb{S}^{1})$. 
\end{lemma}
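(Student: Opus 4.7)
The plan is to exhibit an isometric isomorphism between $C_w^0(\mathbb{S}^1)$ and $C^0(\mathbb{S}^1)$ (with the sup norm) under which $\mathrm{Poly}_1(\mathbb{S}^1)$ corresponds to all Laurent polynomials, and then invoke Stone--Weierstrass. Throughout, we understand ``polynomial'' in the same sense as in the proof of Proposition~\ref{thm:traceone}, namely a Laurent polynomial $p(z) = \sum_{-d \le k \le d} a_k z^k$ (equivalently, a trigonometric polynomial in $\phi$ under $z = e^{i\phi}$).

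First I would define $\Phi \colon C_w^0(\mathbb{S}^1) \to C^0(\mathbb{S}^1)$ by $\Phi(f)(z) = f(z)/(z-1)$. The membership condition \eqref{eq:Cw} ensures $\Phi(f)$ extends continuously across $z=1$, and the definition \eqref{eq:weightednorm} of $\|\cdot\|_w$ says exactly that $\|\Phi(f)\|_\infty = \|f\|_w$; the sup over $z\neq 1$ coincides with the sup over $\mathbb{S}^1$ by continuity. The map $g \mapsto (z-1)g$ is an inverse landing in $C_w^0$, so $\Phi$ is an isometric isomorphism of Banach spaces.

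Next I would identify $\Phi(\mathrm{Poly}_1(\mathbb{S}^1))$ with the algebra $\mathcal{L}$ of all Laurent polynomials on $\mathbb{S}^1$. The inclusion $\mathcal{L} \subset \Phi(\mathrm{Poly}_1)$ is immediate, since any Laurent polynomial $q$ equals $\Phi(p)$ with $p = (z-1)q \in \mathrm{Poly}_1$. For the reverse inclusion, any $p \in \mathrm{Poly}_1$ can be written $p(z) = \sum_{0 < |k| \le d} a_k (z^k - 1)$ as in the last paragraph of the proof of Proposition~\ref{thm:traceone}, so it suffices to observe that $(z^k - 1)/(z-1)$ is a Laurent polynomial for each $k \neq 0$: for $k > 0$ it equals $1 + z + \cdots + z^{k-1}$, and for $k = -m < 0$ it equals $-(z^{-m} + z^{-m+1} + \cdots + z^{-1})$.

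Finally, by the Stone--Weierstrass theorem, $\mathcal{L}$ is dense in $(C^0(\mathbb{S}^1), \|\cdot\|_\infty)$: it is a unital subalgebra separating points and closed under complex conjugation (since $\overline{z} = z^{-1}$ on $\mathbb{S}^1$). Pulling back through the isometric isomorphism $\Phi$ yields density of $\mathrm{Poly}_1(\mathbb{S}^1)$ in $(C_w^0(\mathbb{S}^1), \|\cdot\|_w)$, which is the lemma. There is no serious obstacle; the only mildly delicate point is the verification that $\Phi$ is genuinely onto $C^0(\mathbb{S}^1)$ and isometric, which as noted uses the extension of $f/(z-1)$ across $z=1$ provided by the definition of $C_w^0$.
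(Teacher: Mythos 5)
Your proof is correct and takes essentially the same approach as the paper: both observe that $f \mapsto f/(z-1)$ identifies $C^0_w(\mathbb{S}^1)$ isometrically with $C^0(\mathbb{S}^1)$, apply Stone--Weierstrass to approximate $f/(z-1)$ by a (Laurent) polynomial $\wt p$, and take $p = (z-1)\wt p$. You simply package this as an explicit isometric isomorphism and additionally verify the reverse inclusion $\Phi(\mathrm{Poly}_1) \subset \mathcal{L}$, which is not needed for density but is harmless.
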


\begin{proof}[Proof of Lemma~\ref{lem:uniform-bd}]
Given $\delta > 0$, write
\begin{equation}
  \label{eq:sumemup}
  \begin{split}
    \la \mu_{h}, f \ra &= \frac{1}{c_{V}} h^{d-1}
    \sum_{\mbox{spec}(S_{h})} f(e^{i\beta_{h,n}}) \\
    &=\frac{1}{c_{V}} h^{d-1}\lp 
    \sum_{\absv{\beta_{h,n}} \ge \delta} f(e^{i\beta_{h,n}}) + \sum_{\absv{\beta_{h,n}} < \delta} f(e^{i\beta_{h,n}})\rp,
  \end{split}
\end{equation}
where here and below we choose the branch for which $\beta_{h,n} \in
(-\pi, \pi]$.  To estimate these we will use the following: with $R'$, $c$,  $C$ as in Lemma~\ref{thm:veryclosetozero}, 
 there exists $a > 0$, such that for
every $L > R'/h$,  
\begin{equation}
  \label{eq:howmanybadguys}
  \mbox{ there are at most } a L^{d-1} \mbox{ eigenvalues of
    $S_{h} - 1$ larger than } Ce^{-cL}.
\end{equation}
This follows from \eqref{eq:almosteigenfunction}. To see this, let
$Z(a, L)$ be the subspace of $L^2(\mathbb{S}^{d-1})$ spanned by the
eigenfunctions of $S_h - \Id$ with eigenvalues larger than $C
e^{-cL}$.  Then 
\begin{equation}
\text{$\| (S_h - \Id) u \| \geq C e^{-cL} \| u \|$ for all $u \in Z(a, L)$.}
\label{ppp}\end{equation}
If the dimension of $Z(a, L)$ were greater than $d_1 + \dots + d_L$, which is bounded by $a L^{d-1}$ for suitable $a$, then $Z(a, L)$ would contain a spherical harmonic $\phi_{lm}$ with $l > L$. But putting $u = \phi_{lm}$ in \eqref{ppp} contradicts \eqref{eq:almosteigenfunction}. This establishes \eqref{eq:howmanybadguys}.

To analyze \eqref{eq:sumemup}, first we show that the sum over the eigenvalues $\absv{\beta_{h,n}}
\ge \delta$ is bounded
by $c \norm[w]{f}$.  Each term in the sum satisfies $\absv{f(z)} \le
\norm[w]{f} \absv{z - 1} \le 2 \norm[w]{f}$.  Choosing $L = C/h$ for
large $C$, we have
$ce^{-L/c} \le \delta/2$ for $h$ small, and thus by \eqref{eq:howmanybadguys}
at most $c'/h^{d-1}$ eigenvalues $S_{h} - 1$ larger than $\delta$,
where $c'$ is independent of $\delta$.
Thus
\begin{equation}
  \label{eq:1}
  \frac{1}{c_{V}}h^{d-1} \sum_{\absv{\beta_{h,n}} \ge
    \delta} f(e^{i\beta_{h,n}})  < c' \norm[w]{f}
\end{equation}

Now we estimate the sum over the eigenvalues $\absv{\beta_{h,n}}
< \delta$.  Write
\begin{equation*}
   \sum_{\absv{\beta_{h,n}} < \delta} f(e^{i\beta_{h,n}}) = \sum_{j
   = 0}^{\infty} \sum_{A(\delta,j)} f(e^{i\beta_{h,n}}),
\end{equation*}
where 
\begin{equation*}
  \label{eq:3}
  A(\delta,j) := \set{\beta_{h,n} : \absv{\beta_{h,n}} \in [ \delta 2^{-(j + 1)},
   \delta 2^{-j})}.
\end{equation*}
Taking $L$ such that $c e^{-L / c} = \delta 2^{-(j + 1)}$, there are at
most $c' \lp j\log 2 - \log \delta \rp^{d - 1}$ eigenvalues in
$A(\delta,j)$ for some $c'$ independent of $\delta$.  But in $A(\delta, j)$, $\absv{f(z)} < \norm[w]{f}
\delta 2^{-j}$. Thus, for some $c > 0$ whose value changes from line
to line,
\begin{equation*}
  \begin{split}
    \absv{\sum_{j = 0}^{\infty} \sum_{A(\delta,j)} f(e^{i\beta_{h,n}})
    } &\le \sum_{j = 0}^{\infty} \absv{A(\delta,j)} \norm[w]{f}
    \delta 2^{-j} \\
    &\le \sum_{j = 0}^{\infty} c \lp j\log 2 - \log \delta \rp^{d - 1} \norm[w]{f}
    \delta 2^{-j}  \\
    &\le  c \delta^{1/2} \norm[w]{f}.
  \end{split}
\end{equation*}
Thus
\begin{equation*}
  \la \mu_{h}, f \ra \le  
  c (1 + \delta^{1/2})  \norm[w]{f}. 
\end{equation*}
 This
proves \eqref{eq:estimate}.

\end{proof}

\begin{proof}[Proof of Lemma~\ref{lem:dense}]
note that given $f \in C^{0}_{w}(\mathbb{S}^{1})$, by
definition $f/(z-1)$ is continuous, so by the Stone-Weirstrass theorem, for any
$\epsilon > 0$ there is a polynomial $\wt{p}$ so that
\begin{equation}\label{eq:density}
  \begin{split}
    \epsilon &> \sup_{z \in \mathbb{S}^{1}}\absv{ f/(z-1) - \wt{p}} 
    =  \sup_{z \in \mathbb{S}^{1}}\absv{ \frac{1}{z - 1} \lp f - (z -
      1)\wt{p}\rp} = \norm[w]{f - (z - 1)\wt{p}}.
  \end{split}
\end{equation}
Letting $p = (z-1)\wt{p}$ gives the desired density.
\end{proof}

\begin{proof}[Proof of Theorem \ref{thm:mainthm}]
By Lemma~\ref{lem:uniform-bd}, $\mu_h$ is a bounded linear operator on
$C^{0}_{w}(\mathbb{S}^{1})$, uniformly as $h \to 0$. Therefore to
prove \eqref{eq:equicompactw}, it is only necessary to prove it on a dense subspace. By Lemma~\ref{lem:dense}, therefore, it suffices to prove \eqref{eq:equicompactw} for polynomials that vanish at $z=1$. But this was done in Proposition~\ref{thm:traceone}, so the proof is complete. 
\end{proof}

Finally, we prove Corollary \ref{thm:countingcor}.
\begin{proof}[Proof of Corollary \ref{thm:countingcor}]
Let $1_{[\phi_{0}, \phi_{1}]}$ denote the characteristic function of
the sector on $\mathbb{S}^{1}$ between angles $\phi_{0}$ and
$\phi_{1}$ not passing through $1$, and let $f$ and $g$ be continuous,
positive functions on $\mathbb{S}^{1}$ satisfying $f \le 1_{[\phi_{0}, \phi_{1}]} \le g$.
Then $\Tr f(S_{h}) \le \Tr 1_{[\phi_{0}, \phi_{1}]}(S_{h}) \le \Tr
g(S_{h})$, so 
\begin{equation}
  \label{eq:squeeze}
  \begin{split}
    \frac{1}{2\pi}\oint f(e^{i\phi}) d\phi &= \lim_{h \to 0} \la \mu_{h}, f
    \ra  \\
    &\le \frac{1}{c_{V}} (2\pi h)^{d-1} \Tr 1_{[\phi_{0}, \phi_{1}]}(S_{h}) \\
    &\le \lim_{h \to 0} \la \mu_{h}, g \ra  = \frac{1}{2\pi}\oint
    g(e^{i\phi}) d\phi
  \end{split}
\end{equation}
Letting the integrals of $f$ and $g$ tend to $(\phi_{1} - \phi_{0}) /
(2\pi)$ and using $\Tr 1_{[\phi_{0}, \phi_{1}]}(S_{h})  =
N_{h}(\phi_{0}, \phi_{1})$ proves the corollary.
\end{proof}


\section{Comparison of results with \cite{DGHH2013}}\label{sec:comp}

In \cite{DGHH2013}, Datchev, Humphries and the first two authors proved the following.  For a
Hamiltonian $H$ as in \eqref{eq:operator}, assume that
  the potential function $V$ is central ($V = V(\absv{x})$).  The
  radius of the convex hull of the support, which will be relevant
  below, we denote by
  \begin{equation}
    \label{eq:radiusconvexhull}
    R := \inf\set{R' : \mbox{supp} (V) \subset B_{R'}(0) }.
  \end{equation}
  In this case, the scattering matrix $S_{h}$ is diagonalized by the spherical
  harmonics.  Denoting an arbitrary spherical harmonic by $\phi_{l}$
  where $\Delta_{\mathbb{S}^{d-1}} \phi_{l} = l(l + n - 2)\phi_{l}$,
  as discussed in \cite{DGHH2013}, the eigenvalue of $\phi_{l}$ for
$S_{h}$ depends only on the angular momentum $l$, so we may define
  \begin{equation}
    \label{eq:centraleigenval}
    S_{h} \phi_{l} = e^{i \beta_{h,l}} \phi_{l}, 
  \end{equation}
keeping in mind that the eigenvalue $e^{i \beta_{h,l}}$ has
multiplicity $d_{l}$ from \eqref{eq:multiplicity}.
  \begin{theorem}\label{thm:centralequi}
  For a Hamiltonian $H$ as in \eqref{eq:operator} with $V$
  central and $R$ as in \eqref{eq:radiusconvexhull}.  Let
  $\Sigma(\alpha)$ denote the scattering angle function
  (see \cite[Section 2]{DGHH2013}), and assume that
  \begin{equation}
    \label{eq:scatteringangleassumption}
    \Sigma'(\alpha) \mbox{ has finitely many zeros in } [0, R).
  \end{equation}
Then the set of eigenvalues $e^{i \beta_{h,l}}$ with $l \le R/h$
equidistribute around the unit
circle, meaning that, if we let $\overline{N}_{h}(\phi_{0}, \phi_{1})$ denote the
number of $\beta_{h,l}$ with $l \le R/h$ satisfying $\phi_{0} \le
\beta_{h,l} \le \phi_{1}$ counted with multiplicity $m_{d}(l) = \dim
\ker \Delta_{\mathbb{S}^{d-1}} - l(l + n - 2)$, then
\begin{equation}
  \label{eq:equicentral}
\sup_{0 \le \phi_{0} < \phi_{1} \le 2 \pi} \absv{
  \frac{\overline{N}_{h}(\phi_{0}, \phi_{1})}{\overline{N}_{h}(0, 2\pi)} - \frac{\phi_{1} -
    \phi_{0}}{2\pi}  } \to 0 \mbox{ as } h \to 0.
\end{equation}
\end{theorem}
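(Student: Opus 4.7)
In the central case, because $S_h$ acts diagonally on spherical harmonics, we have a one--parameter family of eigenvalues $e^{i\beta_{h,l}}$ indexed by the angular momentum $l$, counted with multiplicity $m_d(l) = O(l^{d-2})$. The plan is to (i) derive a semiclassical asymptotic for $\beta_{h,l}$ in terms of a classical action whose derivative encodes the scattering angle $\Sigma$, (ii) translate equidistribution into a Weyl--type condition on weighted exponential sums in $l$, and (iii) control those sums by stationary phase, using the hypothesis on $\Sigma'$.

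For step (i), I would analyze the radial ODE obtained by separating variables in $(h^2\Delta+V-1)u=0$. Standard WKB/Langer analysis for the Bessel--type radial equation (or alternatively the Born--Oppenheimer/semiclassical Hankel expansion used in Section~\ref{sec:scatteringmatrix}) gives, uniformly in $l$ with $l h \le R - \epsilon$, an expansion of the form
\begin{equation}\label{eq:wkbphase}
\beta_{h,l} = h^{-1} f(lh) + O(1),
\end{equation}
where $f$ is a smooth function on $(0,R)$ determined by the radial action integral, and the classical relation
\begin{equation}\label{eq:fpscattering}
2 f'(\alpha) = -\Sigma(\alpha) \pmod{2\pi}
\end{equation}
holds, expressing the well--known fact that the derivative of the phase shift with respect to impact parameter is (half) the scattering angle. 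The endpoints $lh \to 0$ and $lh \to R$ require separate care (glancing and near--axial rays), but on the relevant range these are lower--order contributions.

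For steps (ii)--(iii), by the standard Weyl equidistribution argument (approximating indicator functions of arcs by trigonometric polynomials from above and below), the uniform estimate \eqref{eq:equicentral} follows once one shows that, for every nonzero integer $k$,
\begin{equation}\label{eq:weylsum}
\frac{1}{\overline{N}_h(0,2\pi)} \sum_{l \le R/h} m_d(l)\, e^{ik\beta_{h,l}} \longrightarrow 0 \quad \text{as } h \to 0.
\end{equation}
Since $\overline{N}_h(0,2\pi) \sim c h^{-(d-1)}$, the claim reduces to showing the sum itself is $o(h^{-(d-1)})$. Using \eqref{eq:wkbphase}, the sum is comparable, after a Riemann--sum/Euler--Maclaurin comparison, to
\begin{equation*}
h^{-(d-1)} \int_0^R \alpha^{d-2} e^{ik f(\alpha)/h}\, d\alpha + \text{lower order}.
\end{equation*}
By \eqref{eq:fpscattering}, the stationary points of the phase $kf/h$ in $\alpha$ are exactly the zeros of $\Sigma$ in $[0,R)$, and by hypothesis there are only finitely many. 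Away from them, repeated integration by parts gives $O(h^N)$. Near each such $\alpha_j$, the integral is $O(h^{1/(p_j+1)})$ where $p_j$ is the order of vanishing of $\Sigma$ at $\alpha_j$ (nondegenerate case gives $O(h^{1/2})$). Summing the contributions, the full integral is $o(1)$, giving \eqref{eq:weylsum}.

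\textbf{Main obstacle.} The delicate point will be step (i): obtaining the uniform WKB expansion \eqref{eq:wkbphase} with the precise identification \eqref{eq:fpscattering}, uniformly over $l$ up to $l \sim R/h$. Turning points near $lh = R$ (glancing rays that barely interact with $V$) require transition to Airy--type asymptotics, and merging this with the free region asymptotics is the most technical part. A secondary (but easier) issue is handling stationary points of $\Sigma$ whose order of vanishing is not \emph{a priori} controlled by the hypothesis; one must argue that the total contribution stays $o(1)$, which is automatic once each individual critical point gives a power--of--$h$ improvement by van der Corput type bounds.
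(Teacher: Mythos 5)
This theorem is not proved in the present paper; it is restated from \cite{DGHH2013} in Section~\ref{sec:comp} purely for comparison with the non-central results. So there is no ``paper's proof'' to line up against beyond the citation, but the proposal can still be checked on its own terms, and it has a genuine gap in step~(iii).

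The misidentification of the stationary points is the crux. The proposal claims the stationary points of the phase $k f(\alpha)/h$ are the zeros of $\Sigma$, ``and by hypothesis there are only finitely many.'' But the hypothesis \eqref{eq:scatteringangleassumption} is about the zeros of $\Sigma'$, not of $\Sigma$, so the proposal's argument does not connect to the stated assumption at all. Moreover, the replacement of the exponential sum over $l$ by the integral $\int_0^R \alpha^{d-2} e^{ikf(\alpha)/h}\,d\alpha$ by a naive Riemann-sum comparison is not legitimate here: the integrand oscillates at rate $1/h$, which is comparable to the grid spacing $h$ in $\alpha = lh$, so the error in the sum-to-integral replacement is of the same size as the quantity you are estimating. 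The correct framework is Poisson summation: the sum $\sum_l m_d(l)\, e^{ik\beta_{h,l}}$ is rewritten as $\sum_{m\in\mathbb{Z}} h^{-(d-1)} \int_0^R c\,\alpha^{d-2} e^{i(kf(\alpha) - 2\pi m\alpha)/h}\,d\alpha$ (plus controllable remainders), and the critical points of $\psi_m(\alpha) = kf(\alpha) - 2\pi m\alpha$ are where $\Sigma(\alpha) \in \frac{4\pi}{k}\mathbb{Z}$, not where $\Sigma(\alpha) = 0$. This is exactly the content of the remark in Section~\ref{sec:comp} that the $l$-th fixed point set $\mathcal{F}_l$ consists of $(\omega,\eta)$ with $\Sigma(|\eta|) \in \frac{2\pi}{l}\mathbb{Z}$; these resonances are the obstruction to equidistribution, and they are parametrized by a lattice of levels of $\Sigma$, not by zeros of $\Sigma$. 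The role of the hypothesis on $\Sigma'$ is then to guarantee that $\Sigma$ is piecewise strictly monotone with finitely many pieces, so that each level set $\Sigma^{-1}(c)$ is finite and van der Corput / stationary-phase bounds can be summed over $m$. Without Poisson summation and without this correct identification of the resonance set, the estimate \eqref{eq:weylsum} does not follow. Steps~(i) (the uniform WKB/Langer expansion with $2f'(\alpha) = -\Sigma(\alpha)$) and~(ii) (Weyl's criterion reformulation) are reasonable and consistent with the standard approach; the flaw is localized in step~(iii).

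A secondary point, which you flag but do not resolve, is that the hypothesis does not bound the order of vanishing of $\Sigma'$ at its zeros. Near a degenerate critical point of the phase, stationary phase gives only $O(h^{1/(p+1)})$, and since $p$ is not controlled a priori, you must argue that some positive power of $h$ is still gained (which is true since $p<\infty$ at each of the finitely many zeros, but should be said explicitly), or use the softer dominated-convergence argument as in Proposition~\ref{prop:tr} of the Appendix, which obtains $o(h^{-n})$ from a measure-zero fixed-point set without needing any order-of-vanishing quantification.
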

Note that the difference between $\overline{N}_{h}$ in the theorem and
$N_{h}$ in Theorem \ref{thm:countingcor} is that $\overline{N}_{h}$
deliberately excludes the eigenvalues corresponding to spherical
harmonic with angular momenta $\ge R/h$. The number
$\overline{N}_{h}(0, 2\pi)$ is by definition the total number of
spherical harmonics
with angular momentum $l \le R/h$.  In particular, by \cite[Corollary
4.3]{taylor:vol2}, we
have the leading order expansion
\begin{equation}
  \label{eq:weylasymptotics}
  \overline{N}_{h}(0, 2\pi) = 2\frac{R^{d-1}}{(d - 1)!} h^{-(d - 1)} + O(h^{-(d-2)}).
\end{equation}
For a central potential with scattering angle $\Sigma$ satisfying
\eqref{eq:scatteringangleassumption}, the interaction region satisfies
$\mathcal{I} = \set{(\omega, \eta) : |\eta| \le R }$,
since a bicharacteristic $t\omega + \eta$ hits the potential
if and only if $\absv{\eta} \le R$.  Thus the fixed point
set is actually finite in this case, and the hypotheses of Theorem
\ref{thm:mainthm} hold.    Furthermore,
\begin{equation}
  \label{eq:interractionarea}
  \begin{split}
    \Vol(\mathcal{I}) &= \Vol B^{d-1}(R) \times
    \Vol{\mathbb{S}^{d-1}} 
    = 2\frac{R^{d-1}}{(d - 1)!}  (2\pi)^{d-1}
  \end{split}
\end{equation}
where $B^{d-1}(R)$ is the ball of radius $R$ in $\mathbb{R}^{d-1}$,
$\mathbb{S}^{d-1}$ is the unit sphere, and both quantites on the right
are the Riemannian volumes.   Thus the Theorem \ref{thm:centralequi}
implies Theorem \ref{thm:countingcor} above.

On the other hand, the $l^{th}$ fixed point set $\mathcal{F}_{l}$ (see
\eqref{eq:interactionfixed}) is equal to those $(\omega, \eta)$ with
$\absv{\eta} \le R$ satisfying $\Sigma(\absv{\eta})
\in \frac{1}{l} 2\pi \mathbb{Z}$.  The assumption on $\Sigma$ in
\eqref{eq:scatteringangleassumption} is thus stronger than Assumption
\ref{thm:dynass}.

  As for the other eigenvalues,
i.e.\ the $e^{i \beta_{h,l}}$ with $l > R/h$, in
\cite{DGHH2013} we showed that they
are very close to $1$.
\begin{theorem}\label{thm:centralseparation}
  For $V$ central and $R$ as in \eqref{eq:radiusconvexhull}, let
  $\kappa \in (0, 1)$.  Then the $e^{i\beta_{h,l}}$ with $l \ge (R +
  h^{\kappa})/h$ satisfy
  \begin{equation}
    \label{eq:close}
    \absv{e^{i\beta_{h,l}} - 1} = O(h^{\infty}) \mbox{ as } h \to 0.
  \end{equation}
\end{theorem}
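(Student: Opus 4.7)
The plan is to exploit the radial symmetry of $V$ to reduce to a scalar Bessel-function estimate, and then recycle the scheme of the proof of Lemma~\ref{thm:veryclosetozero}. Because $V$ is central, $S_{h}$ commutes with the pullback action of $O(d)$ on $L^{2}(\mathbb{S}^{d-1})$, so each spherical-harmonic eigenspace $\mathcal{H}_{l}$ is $S_{h}$-invariant and $S_{h}|_{\mathcal{H}_{l}} = e^{i\beta_{h,l}} \Id$ by Schur's lemma.  Hence for any nonzero $\phi_{l} \in \mathcal{H}_{l}$,
\[
\absv{e^{i\beta_{h,l}} - 1} = \frac{\norm[L^{2}]{(S_{h} - \Id)\phi_{l}}}{\norm[L^{2}]{\phi_{l}}},
\]
and it suffices to bound the right-hand side by $O(h^{\infty})$ whenever $lh \ge R + h^{\kappa}$. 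Running the first half of the proof of Lemma~\ref{thm:veryclosetozero} with $u_{l,h} = J_{\nu}(r/h)\phi_{l}$, $\nu = l + (d-2)/2$, the Poisson-operator identity together with the Vasy--Zworski resolvent bound reduces the question to
\[
\norm[L^{2}(\mathbb{S}^{d-1})]{(S_{h} - \Id)\phi_{l}} \le \frac{C}{h}\, \norm[L^{2}(\mathbb{R}^{d})]{V J_{\nu}(r/h)\phi_{l}},
\]
so everything boils down to a sharp pointwise bound on $J_{\nu}(r/h)$ for $r \in \supp V$.

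Set $\gamma = r/(\nu h)$. On $\supp V$, the hypothesis $lh \ge R + h^{\kappa}$ forces $\gamma \le R/(R + h^{\kappa})$, so $1 - \gamma \ge c h^{\kappa}$, placing $J_{\nu}(r/h)$ deep in the classically forbidden region. Applying the exponential asymptotic \cite[9.3.7]{AS1964} with $\operatorname{sech}\alpha = \gamma$, and using $\alpha - \tanh\alpha \sim \alpha^{3}/3$ for small $\alpha$, one obtains
\[
\absv{J_{\nu}(r/h)} \le C \exp\bigl(-c \nu (1-\gamma)^{3/2}\bigr) \le C \exp\bigl(-c' h^{3\kappa/2 - 1}\bigr),
\]
which is $O(h^{\infty})$ provided $\kappa \in (0, 2/3)$, completing the argument in that range.

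The harder range is $\kappa \in [2/3, 1)$, where the classical turning point $\nu h$ sits within the natural Airy scale of $\supp V$ and the naive exponential asymptotic degenerates: the exponent $\nu(1-\gamma)^{3/2}$ ceases to tend to infinity as $h \to 0$. Here I would replace \cite[9.3.7]{AS1964} by the Airy-uniform asymptotic \cite[9.3.35--9.3.38]{AS1964},
\[
J_{\nu}(\nu\gamma) \sim \Bigl(\frac{4\zeta}{1-\gamma^{2}}\Bigr)^{1/4} \nu^{-1/3} \Ai(\nu^{2/3}\zeta), \qquad \zeta \sim 2^{1/3}(1-\gamma) \text{ near } \gamma=1,
\]
and combine it with the explicit partial-wave formula writing $e^{i\beta_{h,l}} - 1$ as a ratio of the logarithmic derivatives of the Hankel functions $H_{\nu}^{(1)}, H_{\nu}^{(2)}$ matched against the interior radial solution at $r = R$. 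This Airy transition regime is the main obstacle: since $\nu^{2/3}\zeta$ is of size $h^{\kappa - 2/3}$ and vanishes as $h \to 0$ once $\kappa > 2/3$, the Airy factor is only $O(1)$, and the required $O(h^{\infty})$ must be extracted from cancellation between the matching terms rather than from the Airy function itself.
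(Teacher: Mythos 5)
The paper itself does not prove Theorem~\ref{thm:centralseparation}: it is quoted from \cite{DGHH2013} in Section~\ref{sec:comp} as part of a comparison with the present results, and no proof appears here. So there is nothing to compare your proposal against; I can only judge it on its own terms. Two caveats on the part you do carry out. First, \cite[9.3.7]{AS1964} is a Debye-type asymptotic valid uniformly only for $\gamma = \operatorname{sech}\alpha$ bounded away from $1$; under your hypothesis $lh \ge R + h^{\kappa}$ the relevant $\gamma$ tends to $1$ as $h \to 0$, so the correct tool is the Airy-uniform expansion \cite[9.3.35]{AS1964}, which for $\nu^{2/3}\zeta \gg 1$ (equivalently $1-\gamma \gg \nu^{-2/3} \sim h^{2/3}$, i.e.\ precisely $\kappa < 2/3$) reproduces the same exponential bound $\exp(-c\,\nu(1-\gamma)^{3/2})$; so your estimate survives, but the citation needs adjusting. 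Second, your reduction $\norm[L^2]{(S_h - \Id)\phi_l} \le Ch^{-1}\norm[L^2]{f_{l,h}}$ goes through the Vasy--Zworski resolvent bound, which requires $V$ non-trapping at energy $1$ --- a hypothesis Theorem~\ref{thm:centralseparation} as stated does not impose. For a central $V$ one can instead argue directly on the radial ODE, which is presumably what \cite{DGHH2013} does.

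The genuine gap is the one you flag yourself: the argument for $\kappa \in [2/3,1)$. Your diagnosis that the pointwise Bessel bound degenerates there is correct, but the sketched remedy via Hankel-function matching and ``cancellation'' is both underdeveloped and, more to the point, unnecessary. The observation you are missing is that $V$ is smooth, compactly supported, and $R$ is exactly the radius of the convex hull of $\supp V$; hence $V$ vanishes to infinite order at $r = R$, i.e.\ for every $N$ there is $C_N$ with $|V(r)| \le C_N (R-r)^N$ for $r \le R$. Now fix any $\lambda \in (0,2/3)$ and split the integral defining $\norm[L^2]{V J_\nu(r/h)\phi_l}^2 = \int_0^R V(r)^2 J_\nu(r/h)^2 r^{d-1}\,dr\cdot\norm[L^2]{\phi_l}^2$ at $r = R - h^{\lambda}$. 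On $[0, R-h^{\lambda}]$ one has $1-\gamma \gtrsim h^{\lambda}$, so $\nu(1-\gamma)^{3/2} \gtrsim h^{3\lambda/2 - 1} \to \infty$, and the Airy-uniform asymptotic makes the Bessel factor $O(h^{\infty})$. On $[R-h^{\lambda}, R]$ one has $|V(r)| \le C_N h^{\lambda N}$ while $J_\nu$ is bounded, so the contribution is $O(h^{\lambda(N+1)})$ for every $N$, again $O(h^{\infty})$. Thus $\norm[L^2]{f_{l,h}} = O(h^{\infty})$ uniformly over $l \ge (R + h^{\kappa})/h$ for \emph{every} $\kappa \in (0,1)$, the factor $h^{-1}$ is harmless, and your displayed reduction finishes the proof --- no cancellation between matching terms is needed, only the flatness of $V$ at the edge of its support.
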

This theorem is weaker than Lemma \ref{thm:veryclosetozero} in the
sense that it does not give exponential decay, and moreover gives no
decay with respect to $l$.  On the other hand, Theorem
\ref{thm:centralseparation} is stronger in the sense that it holds on
a larger region than Lemma \ref{thm:veryclosetozero}.  This is true not
only because Theorem \ref{thm:centralseparation} holds on a region
asymptotically approaching the interaction region, but because the
Lemma \ref{thm:veryclosetozero} is essentially control over $S_{h} -
\Id$ on a region of phase space outside of a \textit{ball bundle} containing
the interaction region.  But in general there are portions of the
complement of the interaction region, $\mathcal{I}^{c}$, that do not lie in
the complement of a ball bundle containing $\mathcal{I}$.  The work in
this paper indicates that, for given $h$, there are approximately $c_{V} (2\pi
h)^{-(d-1)}$ eigenvalues that are equidistributed around the unit
circle, and the rest are close to $1$.  Theorem
\ref{thm:centralseparation} quantifies the latter statement by separating the
spectrum into two parts, one equidistributing and the other close to
$1$, while Lemma \ref{thm:veryclosetozero} does not.  It would be
interesting to know if such a separation of the spectrum were possible
in the non-central setting.


\section{Appendix: Trace formula for semiclassical FIOs}

In this section we shall prove a trace formula for semiclassical FIOs on a compact manifold. In fact, we prefer here to think in terms of Legendre distributions, for reasons that we now explain. 

Let $M$ be a compact manifold of dimension $d$. Then $T^* M \times
\RR$ is a contact manifold, with canonical contact form $\alpha :=
\zeta \cdot dz - d\tau$, where $z$ are local coordinates on $M$,
$\zeta$ dual coordinates on the fibres of $T^* M$, and $\tau$ is the
coordinate on $\RR$. A Legendre submanifold, $L$, of $T^* M \times \RR$ is a
submanifold of dimension $d$ on which $\alpha$ vanishes
identically. There is a very close relationship between Legendre
submanifolds on $T^* M \times \RR$ and Lagrangian submanifolds of $T^*
M$. To describe this, let $\pi : T^* M \times \RR \to T^* M$ denote
the canonical projection. Then the vector field $\partial_\tau$ is
never tangent to $L$ due to the vanishing of $\alpha$ on $L$, so $\pi$
is locally a diffeomorphism from $L$ to $\Lambda = \pi(L)$. Working
locally, i.e.~ restricting attention to a small open set of $L$, we
can assume $\Lambda$ is a submanifold. Moreover, since $d\alpha =
d\zeta \wedge dz$ vanishes on $L$, it also vanishes on $\Lambda$,
i.e. ~ $\Lambda$ is Lagrangian.

Conversely, suppose that $\Lambda \subset T^* M$ is a Lagrangian submanifold. Then since $d(\zeta \cdot dz) = 0$ on $\Lambda$, it is (at least locally) the differential of a smooth function, say $f$, on $\Lambda$. Then it is easy to check that
$$
L = \{ (z, \zeta, \tau) \mid (z, \zeta) \in \Lambda, \ \tau = f(z, \zeta) \} \subset T^* M \times \RR
$$
is a Legendre submanifold of $T^* M \times \RR$. Notice that $f$ is determined up to an additive constant, and hence $L$ is determined up to shifting $\tau$ by an additive constant. 

Now consider a semiclassical Lagrangian distribution $A$ associated to $\Lambda$. This will have an oscillatory integral representation in terms of (one or several) phase function(s) $\Phi$, depending on $z$ and an auxiliary coordinate $v \in \RR^k$,  locally parametrizing $\Lambda$, in the sense that locally, we have
$$
\Lambda = \{ (z, d_z\Phi(z, v)) \mid d_v\Phi(z, v) = 0 \}.
$$
Notice that $\Phi$ is undetermined up to an additive constant. Adding $c$ to $\Phi$ will have the effect of changing the Lagrangian distribution by $e^{ic/h}$. In some settings this is irrelevant: for example, if $A$ represents a family of eigenfunctions or quasimodes, multiplication by a complex unit is harmless. However, in other cases, as in the present paper where we consider the semiclassical scattering matrix, the Schwartz kernel is determined uniquely and multiplication by complex units is not harmless (especially when we study the spectrum of the scattering matrix!). 

We can now explain why we consider it preferable to describe the semiclassical scattering matrix as a Legendre distribution. It is because the $\tau$ coordinate in $T^* M \times \RR$ is given precisely by the value of the phase function. Thus parametrizing $L$ means finding a function $\Phi(z, v)$ such that 
$$
L = \{ (z, d_z\Phi(z, v), \tau = \Phi(z, v)) \mid d_v\Phi(z, v) = 0 \}.
$$
Thus the $\tau$ coordinate eliminates the indeterminacy of $\Phi$ up to an additive constant. Another way of putting this is that multiplication of $A$ by $e^{ic/h}$ would give a family of Lagrangian distributions associated to the same Lagrangian submanifold $\Lambda$, but they would all be associated to different Legendre submanifolds $L_c$. 

The $\tau$ coordinate plays a role in the following trace formula for semiclassical FIOs. (Notice that this does not happen for the trace formula for homogeneous FIOs, since the value of the phase function on the Lagrangian is always zero in the homogeneous case.)

\begin{proposition}\label{prop:tr}
Let $T$ be a semiclassical FIO of (semiclassical) order zero and compact microsupport acting on half-densities on a compact manifold $M$ of dimension $n$, associated to a Legendre submanifold $L \subset T^* M^2 \times \RR$ that maps to the graph of a 
canonical transformation $\sigma$ under the projection $T^* M^2 \times \RR \to T^* M^2$. Then $T$ is trace class and satisfies
\begin{equation}
\Tr T = (2\pi h)^{-n} \int_{\mathrm{Fix}(\sigma)} e^{i\tau(x, \xi)/h} \sigma(T)(x, \xi) |dx d\xi|^{1/2} + o(h^{-n}). 
\label{T-tr-formula}\end{equation}
\end{proposition}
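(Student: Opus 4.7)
The plan is to compute $\Tr T$ by restricting the Schwartz kernel to the diagonal of $M \times M$ and then evaluating the resulting oscillatory integral by stationary phase. Using a finite partition of unity subordinate to a cover of the microsupport of $T$ by product coordinate charts, I would reduce to the case where the kernel has a single oscillatory integral representation
$$
K(z,z') = h^{-n-N/2} \int_{\RR^N} e^{i\Phi(z,z',v)/h} a(z,z',v,h) \, dv,
$$
with the phase $\Phi$ parametrizing an open piece of $L$ in the Legendre sense, i.e., $\Crit(\Phi) = \set{d_v\Phi = 0}$ and the map $(z,z',v) \mapsto (z, d_z\Phi, z', d_{z'}\Phi, \Phi)$ is a diffeomorphism onto an open subset of $L$. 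Taking the trace on half-densities by integration along the diagonal yields
$$
\Tr T = h^{-n-N/2} \int_M \int_{\RR^N} e^{i\Phi(z,z,v)/h} a(z,z,v,h) \, dz \, dv + O(h^{\infty}),
$$
to which I would apply stationary phase in the $(z,v)$ variables.

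The critical points of the diagonal phase $\tilde{\Phi}(z,v) := \Phi(z,z,v)$ are characterized by $d_v\Phi = 0$ together with $d_z\Phi(z,z,v) + d_{z'}\Phi(z,z,v) = 0$. With the standard sign conventions, the first equation places the associated point of $L$ on the graph of $\sigma$, while the second, in combination with $z = z'$, forces that point to represent a fixed point $(z,\xi) = \sigma(z,\xi)$. Thus the critical set of $\tilde{\Phi}$ corresponds, under the Legendre parametrization, precisely to $\Fix(\sigma)$. Crucially, the value of $\Phi$ on this critical set equals the $\tau$-coordinate of the corresponding point of $L$, i.e., the function $\tau(x,\xi)$ of the statement. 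This is the reason for working with Legendre rather than Lagrangian parametrizations: the $\tau$-coordinate pins down the additive constant in $\Phi$ that would otherwise produce an undetermined unimodular factor in the trace formula.

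I would conclude by stationary phase. Where $\sigma$ is the identity on an open set $U \subset T^*M$ (the only way to produce a full-dimensional component of $\Fix(\sigma)$), the critical locus of $\tilde{\Phi}$ parametrizes $U$ and is a clean, nondegenerate critical manifold in the $n$ transverse directions; clean-phase stationary phase then yields the leading contribution $(2\pi h)^{-n}\int_U e^{i\tau/h}\sigma(T)\,\absv{dxd\xi}^{1/2}$. On any lower-dimensional component of $\Fix(\sigma)$ the contribution is of order at most $h^{-n+1/2}$, which is absorbed into the remainder $o(h^{-n})$, and the same bound handles the next-order error from the open component. The principal technical obstacle is the symbolic bookkeeping in the clean stationary phase step: verifying that the Hessian determinant in the transverse directions, combined with the $h^{-n-N/2}$ prefactor, the stationary phase power $(2\pi h)^{(n+N)/2}$, and the Maslov factors, assembles into precisely the invariantly defined principal symbol $\sigma(T)$ times $\absv{dxd\xi}^{1/2}$ on $\Fix(\sigma)$. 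This is the semiclassical analogue of the symbol identification in the Duistermaat--Guillemin trace formula, with the new ingredient that the Legendre framework automatically records the correct overall oscillatory phase $e^{i\tau/h}$.
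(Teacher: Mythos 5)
Your overall strategy---restrict the Schwartz kernel to the diagonal, identify the critical set of the diagonal phase $\tilde\Phi(z,v) = \Phi(z,z,v)$ with $\Fix(\sigma)$, and observe that the phase value on that critical set is the coordinate $\tau$---is the same as the paper's, and your remark that this is exactly why the Legendrian framework matters is well taken. Your handling of the "off-diagonal'' and "$D\sigma \neq \Id$'' regions (non-stationary phase $O(h^\infty)$, and a gain of $h^{1/2}$, respectively) also matches the paper's Cases 1 and 2.

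The gap is in your treatment of the leading-order contribution. You assert that full-measure contributions to $\Fix(\sigma)$ come only from open sets $U$ on which $\sigma = \Id$, and then apply clean stationary phase there. But $\Fix(\sigma)$ is merely a closed subset of $T^*M$; for a smooth canonical transformation it can have positive Liouville measure while having empty interior (a fat-Cantor-type set), and even when the interior is nonempty its boundary can carry positive measure. In those situations the critical set is not a clean manifold, so the clean stationary phase lemma does not apply, and your argument does not produce the $o(h^{-n})$ remainder in the form claimed. The paper's Case 3 sidesteps this entirely: once $D\sigma = \Id$ at the base point, it chooses mixed coordinates $(x',\xi)$ on $L$ and a phase of the form $\Phi(x,x',\xi)$ with $d_x\Phi = \xi$, then inserts a cutoff $\chi$ approximating the characteristic function of $\Fix(\sigma)$ from outside. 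The $(1-\chi)$ contribution is $O(C(\epsilon) h^{1-n})$ by non-stationary phase (since $d\tilde\Phi \neq 0$ off $\Fix(\sigma)$), and the $\chi$ contribution converges to the desired integral over $\Fix(\sigma)$ by dominated convergence as $\epsilon \to 0$. No cleanliness or manifold structure of $\Fix(\sigma)$ is assumed. You would need to replace your clean-phase step with such a measure-theoretic argument. Two smaller points: the prefactor should be $h^{-(n+N)/2}$ rather than $h^{-n - N/2}$ (with $N = n$ this is the paper's $(2\pi h)^{-n}$, and the "transverse dimension'' is $N - n$, i.e.\ zero, so on the open set nothing remains to be integrated by stationary phase); and the Maslov factor is not merely bookkeeping---the paper notes it is trivial precisely because $D\sigma = \Id$ at the relevant points, which is worth stating explicitly.
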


\begin{proof}
Since $T$ has compact microsupport, it is trace class and the trace is given by the integral of the Schwartz kernel restricted to the diagonal. Also, due to the assumption of compact microsupport, $T$ can be written as a finite number of oscillatory integrals of the form 
\begin{equation}
(2\pi h)^{-n} \int_{\RR^n} e^{i\Phi(x, y, v)/h} a(x, y, v, h) \, dv |dx dy|^{1/2}, 
\label{piece}\end{equation}
where $\Phi$ locally parametrizes $L$ nondegenerately in the neighbourhood of a point $q = (x, \xi, y, \eta, \tau) \in L$ and $a$ is smooth and compactly supported in $v$, in fact supported in an arbitrarily small neighbourhood of the point $(q', 0)$ such that $$q' = (x_0, y_0, v_0), \ d_v\Phi(q') = 0 \text{ and }(x, d_x\Phi(q'), y, d_y \Phi(q'), \Phi) = q.$$

\textbf{Case 1.} Suppose that $q$ is not in the set 
$$
N^* \Delta \times \RR = \{ (x, \xi, x, -\xi, \tau) \}. 
$$
Then, either (i) $d_x \Phi(q') + d_y \Phi(q') \neq 0$ or (ii) $x_0 \neq y_0$. In the latter case, (ii), 
by restricting the support of $a$ close to $(q', 0)$ the restriction of the Schwartz kernel of $T$ to the diagonal vanishes identically for small $h$, trivially implying \eqref{T-tr-formula}. If $x_0 = y_0$ but (i) holds, then the trace is given by 
\begin{equation}
(2\pi h)^{-n} \int_{\RR^n} e^{i\Phi(x, x, v)/h} a(x, x, v, h) \, dv \, dx 
\label{Ttrace}\end{equation}
and (i) implies that $d_x (\Phi(x_0, x_0, v_0)) \neq 0$. Let us write $\tPhi(x, v) = \Phi(x,x,v)$.  By taking the support of $a$ sufficiently close to $(q', 0)$, we have $d_x \tPhi(x, v) \neq 0$ whenever $(x, x, v, h)$ is in the support of $a$. Then using the identity
$$
e^{i\tPhi(x, v)/h} = \Big( \frac{h}{i d_x (\tPhi(x, v))} d_x \Big)^N e^{i\tPhi(x, v)/h}
$$
and integrating by parts, we see that \eqref{Ttrace} is $O(h^\infty)$ in this case. 

\textbf{Case 2.} Suppose that $q \in N^* \Delta \times \RR$, but that
the tangent map $D\sigma$ at $\pi(q)$ is not the identity. In this
case, the measure of $\Fix(\sigma)$ is zero, and we will show that the trace is
$O(h^{1/2 - n})$, thereby obtaining Proposition~\ref{prop:tr} in this case.  We
claim that the phase function $\tPhi(x,v)$ restricted to
$x=y$ is nondegenerate in at least one direction, i.e. there is at
least one nonzero component in $d^2 \tPhi(x_0, v_0)$ is
nonzero. Applying the stationary phase lemma in one non-degenerate direction, we gain
$h^{1/2}$, i.e. we find that the trace is $O(h^{1/2 - n})$.

To show this, recall that nondegeneracy of $\Phi$ means, by definition, that the differentials $d(d_{v_i} \Phi)$ are linearly independent whenever $d_v \Phi = 0$. This implies in particular that the submanifold $C$ in $(x, y, v)$-space given by $d_v \Phi = 0$ is a smooth submanifold of codimension $n = \dim v$, and that the map
\begin{equation}
C \ni (x, y, v) \mapsto (x, d_x \Phi, y, d_y \Phi) \in T^* M^2
\label{diffeo}\end{equation}
is a diffeomorphism from $C$ to the graph of $\sigma$. A tangent vector to the graph of $\sigma$ can be represented in $(x, y, v)$-space as a vector $X = a \cdot \partial_x + b \cdot \partial_y + c \cdot \partial_v$ such that $X(d_v \Phi) = 0$. 
Let us assume that 
\begin{equation}d^2_{vv} \tPhi = 0 \text{ and } d^2_{xv} \tPhi = 0 \text{ at }(x_0, v_0), 
\label{assumetPhi}\end{equation}
otherwise there is nothing to prove using the remarks in the first paragraph of Case 2. This implies that 
\begin{equation}
d^2_{vv} \Phi = 0 \text{ and } d^2_{xv} + d^2_{yv}\Phi = 0 \text{ at } (x_0, x_0, v_0). 
\label{assumePhi}\end{equation}
Combining this with the linear independence of $d (d_{v_i} \Phi)$ implies that $d^2_{xv} \Phi$ is nonsingular at $(x_0, x_0, v_0)$, since otherwise the matrix $d_{x,y,v} d_v\Phi$ would have rank strictly less than $n$. Then \eqref{assumePhi}, the nondegeneracy of $d^2_{xv} \Phi$, and $X(d_v \Phi) =  0$ at $(x_0, x_0, v_0)$ imply that 
$$
(a - b) d^2_{xv} \Phi = 0 \implies a = b.
$$
So vectors tangent to $C$ at $(x_0, x_0, v_0)$ are represented by vectors $X = (a, a, c)$ with $a$ and $c$ arbitrary. 

Given $X$ tangent to $C$, let $Y$ be the vector in $T^* M^2$ that
corresponds to it under the diffeomorphism \eqref{diffeo}. By
assumption in Case 2, there exists $X$ such that, writing $Y  = (Y_1,
Y_2)$ with $Y_{1}$ the left and $Y_{2}$ the right components in
$T^{*}M \times T^{*}M$, does \textit{not} satisfy $Y_1 = (\alpha, \beta)$ and $Y_2 = (\alpha,
-\beta)$. Here $Y_i$ are given by 
\begin{equation}\begin{gathered}
Y_1 = a_i  \partial_{x_i} + \Big( a_i  d^2_{x_ix_j} \Phi + b_i d^2_{y_ix_j}\Phi + c_i d^2_{v_ix_j} \Phi \Big) \partial_{\xi_j} , \\
Y_2 = b_i  \partial_{y_i} + \Big( a_i  d^2_{x_iy_j} \Phi + b_i d^2_{y_iy_j}\Phi + c_i d^2_{v_iy_j} \Phi \Big) \partial_{\eta_j} 
\end{gathered}\end{equation}
and we sum over repeated indices. Using $a=b$ we see that the $\partial_x$ components of $Y_1$ and $Y_2$ are equal. Therefore   the $\partial_\xi$ components of $Y_1$ and $Y_2$ cannot sum to zero.  Using \eqref{assumePhi}, this yields the condition 
$$
a_i \Big(  d^2_{x_ix_j} \Phi + d^2_{y_ix_j}\Phi + d^2_{x_iy_j} \Phi + d^2_{y_iy_j}\Phi \Big) \neq 0 \text{ at } (x_0, x_0, v_0),
$$
which implies that the matrix $d^2_{xx} \tPhi(x_0, v_0) \neq 0$. This proves the nonvanishing of the Hessian of $\tPhi$ and completes the proof in Case 2.


\textbf{Case 3.} The remaining case is that $q$ is in $N^* \Delta \times \RR$, and every neighbourhood of $q$ intersects $N^* \Delta \times \RR$ in a set of positive measure. This is only possible if $D\sigma$, the derivative of the map $\sigma$, is the identity at $q$. Now choose a set of local coordinates $x$ on the left factor of $M$ and denote by $x'$ the same coordinates on the right factor of $M$. Similarly denote the dual coordinates $\xi, \xi'$. Then $(x', \xi)$ form coordinates locally on $L$ near $q$. This follows from the fact that $(x, \xi)$ are coordinates on $L$ due to the fact that $\sigma$ is a canonical transformation, and the fact that $D \sigma$ is the identity, implying that $\partial x'/\partial x$ (keeping $\xi$ fixed) is the identity at $q$. It follows that we can choose the phase function $\Phi$ of the form $\Phi(x, x', \xi)$ where $d_x \Phi = \xi$. 

With a phase function chosen as above, consider the integral \eqref{Ttrace}. We introduce a cutoff function $\chi(x, \xi)$ as follows.  Given $\epsilon > 0$, we find an open set $U$ containing $L \cap N^* \Delta \times \RR$ with measure difference $| U \setminus (L \cap N^* \Delta \times \RR)|$ less than $\epsilon$. We can also find  small enlargements $U_\delta$, $U_{2\delta}$ of $U$ such that the measure difference satisfies $|U_\delta \setminus U| < \epsilon$ and $|U_{2\delta} \setminus U_\delta| < \epsilon$.   Let $\chi(x, \xi)$ (interpreted as a function on $L$) be equal to $1$ on $U_\delta$, and supported in $U_{2\delta}$. With $1 - \chi$ inserted in \eqref{Ttrace}, the integral is $O( C(\epsilon) h^{1-n})$ using the argument of case 1, since the differential of $\tPhi$ does not vanish on the support of $1 - \chi$. With $\chi$ inserted, we obtain the integral  
$$
(2\pi h)^{-n} \int_{\RR^n} e^{i\tPhi(x, \xi)/h} a(x, x, \xi, h) \chi(x, \xi) \, dx  \, d\xi 
$$
As $\epsilon \to 0$, the function $\chi$ tends to the characteristic function of $\Fix(\sigma)$ almost everywhere. Therefore, using the dominated convergence theorem, we find that 
$$
\Tr T_h = (2\pi h)^{-n} \Big( \int_{\Fix(\sigma)} e^{i\Phi(x, x, \xi)/h} a(x, x, \xi, h) \,  dx \, d\xi  + o(1) \Big)  + O(C(\epsilon) h^{1-n}).
$$
Here the $o(1)$ is as $\epsilon \to 0$, coming from replacing $\chi$ with its limit, the characteristic function of $\Fix(\sigma)$. Choosing $h$ small enough, we have 
$O(C(\epsilon) h^{1-n}) = o(h^{-n})$. Finally we note that in these coordinates, the symbol of $T$ is $a(x, \xi) |dx d\xi|^{1/2}$ (and the Maslov factors are trivial since $T$ is associated to a canonical transformation $\sigma$ with $D \sigma$ equal to the identity at $q$). This completes the proof of Proposition~\ref{prop:tr}. 
\end{proof}

\bibliographystyle{abbrv}

\begin{thebibliography}{10}

\bibitem{AS1964}
M.~Abramowitz and I.~A. Stegun.
\newblock {\em Handbook of mathematical functions with formulas, graphs, and
  mathematical tables}, volume~55 of {\em National Bureau of Standards Applied
  Mathematics Series}.
\newblock For sale by the Superintendent of Documents, U.S. Government Printing
  Office, Washington, D.C., 1964.

\bibitem{A2005}
I.~Alexandrova.
\newblock Structure of the semi-classical amplitude for general scattering
  relations.
\newblock {\em Comm. Partial Differential Equations}, 30(10-12):1505--1535,
  2005.

\bibitem{BY1980}
M.~S. Birman and D.~R. Yafaev.
\newblock Asymptotics of the spectrum of the s-matrix in potential scattering.
\newblock {\em Soviet Phys. Dokl.}, 25(12):989--990, 1980.

\bibitem{BY1982}
M.~S. Birman and D.~R. Yafaev.
\newblock Asymptotic behavior of limit phases for scattering by potentials
  without spherical symmetry.
\newblock {\em Theoret. Math. Phys.}, 51(1):344--350, 1982.

\bibitem{BP}
V.~Buslaev and A.~Pushnitski.
\newblock The scattering matrix and associated formulas in {H}amiltonian
  mechanics.
\newblock {\em Comm. Math. Phys.}, 293(2):563--588, 2010.

\bibitem{DGHH2013}
K.~Datchev, J.~Gell-Redman, A.~Hassell, and P.~Humphries.
\newblock Approximation and equidistribution of phase shifts: spherical
  symmetry.
\newblock {\em to appear in Comm. Math. Phys.}, 2013.

\bibitem{DS1999}
M.~Dimassi and J.~Sj{\"o}strand.
\newblock {\em Spectral asymptotics in the semi-classical limit}, volume 268 of
  {\em London Mathematical Society Lecture Note Series}.
\newblock Cambridge University Press, Cambridge, 1999.

\bibitem{Sm1992}
E.~Doron and U.~Smilansky.
\newblock Semiclassical quantization of chaotic billiards: a scattering theory
  approach.
\newblock {\em Nonlinearity}, 5(5):1055--1084, 1992.

\bibitem{D1974}
J.~J. Duistermaat.
\newblock Oscillatory integrals, {L}agrange immersions and unfolding of
  singularities.
\newblock {\em Comm. Pure Appl. Math.}, 27:207--281, 1974.

\bibitem{GMB}
C.~G{\'e}rard, A.~Martinez, and D.~Robert.
\newblock Breit-{W}igner formulas for the scattering phase and the total
  scattering cross-section in the semi-classical limit.
\newblock {\em Comm. Math. Phys.}, 121(2):323--336, 1989.

\bibitem{GS1994}
A.~Grigis and J.~Sj{\"o}strand.
\newblock {\em Microlocal analysis for differential operators}, volume 196 of
  {\em London Mathematical Society Lecture Note Series}.
\newblock Cambridge University Press, Cambridge, 1994.
\newblock An introduction.

\bibitem{G1976}
V.~Guillemin.
\newblock Sojourn times and asymptotic properties of the scattering matrix.
\newblock In {\em Proceedings of the {O}ji {S}eminar on {A}lgebraic {A}nalysis
  and the {RIMS} {S}ymposium on {A}lgebraic {A}nalysis ({K}yoto {U}niv.,
  {K}yoto, 1976)}, volume~12, pages 69--88, 1976/77 supplement.

\bibitem{GS2010}
V.~Guillemin and S.~Sternberg.
\newblock {\em Semi-classical analysis}.
\newblock 2010.

\bibitem{HV1999}
A.~Hassell and A.~Vasy.
\newblock The spectral projections and the resolvent for scattering metrics.
\newblock {\em J. Anal. Math.}, 79:241--298, 1999.

\bibitem{HW2008}
A.~Hassell and J.~Wunsch.
\newblock The semiclassical resolvent and the propagator for non-trapping
  scattering metrics.
\newblock {\em Adv. Math.}, 217(2):586--682, 2008.

\bibitem{H}
W.~Hunziker.
\newblock The {S}-matrix in classical mechanics.
\newblock {\em Comm. Math. Phys.}, 8(4):282--299, 1968.

\bibitem{KK}
J.~B. Keller, I.~Kay, and J.~Shmoys.
\newblock Determination of the potential from scattering data.
\newblock {\em Phys. Rev. (2)}, 102:557--559, 1956.

\bibitem{KT}
W.~Klingenberg and F.~Takens.
\newblock Generic properties of geodesic flows.
\newblock {\em Math. Ann.}, 197:323--334, 1972.

\bibitem{Maj1976}
A.~Majda.
\newblock High frequency asymptotics for the scattering matrix and the inverse
  problem of acoustical scattering.
\newblock {\em Comm. Pure Appl. Math.}, 29(3):261--291, 1976.

\bibitem{GST}
R.~Melrose.
\newblock {\em Geometric Scattering Theory}.
\newblock Cambridge University Press, Cambridge, 1995.

\bibitem{M1994}
R.~B. Melrose.
\newblock Spectral and scattering theory for the {L}aplacian on asymptotically
  {E}uclidian spaces.
\newblock In {\em Spectral and scattering theory ({S}anda, 1992)}, volume 161
  of {\em Lecture Notes in Pure and Appl. Math.}, pages 85--130. Dekker, New
  York, 1994.

\bibitem{MT}
M.~Mijatovi{\'c} and K.~Tren{\v{c}}evski.
\newblock Classical inverse problem for finite scattering region.
\newblock {\em Phys. Rev. A (3)}, 38(10):5038--5044, 1988.

\bibitem{N}
S.~Nakamura.
\newblock Spectral shift function for trapping energies in the semiclassical
  limit.
\newblock {\em Comm. Math. Phys.}, 208(1):173--193, 1999.

\bibitem{NT}
H.~Narnhofer and W.~Thirring.
\newblock Canonical scattering transformation in classical mechanics.
\newblock {\em Phys. Rev. A (3)}, 23(4):1688--1697, 1981.

\bibitem{Nov}
R.~G. Novikov.
\newblock Small angle scattering and {X}-ray transform in classical mechanics.
\newblock {\em Ark. Mat.}, 37(1):141--169, 1999.

\bibitem{PS}
V.~Petkov and L.~Stojanov.
\newblock Spectrum of the {P}oincar\'e map for periodic reflecting rays in
  generic domains.
\newblock {\em Math. Z.}, 194(4):505--518, 1987.

\bibitem{PS2}
V.~M. Petkov and L.~N. Stoyanov.
\newblock {\em Geometry of reflecting rays and inverse spectral problems}.
\newblock Pure and Applied Mathematics (New York). John Wiley \& Sons Ltd.,
  Chichester, 1992.

\bibitem{RSIII}
M.~Reed and B.~Simon.
\newblock {\em Methods of modern mathematical physics. {III}}.
\newblock Academic Press [Harcourt Brace Jovanovich Publishers], New York,
  1979.

\bibitem{RT1989}
D.~Robert and H.~Tamura.
\newblock Asymptotic behavior of scattering amplitudes in semi-classical and
  low energy limits.
\newblock {\em Ann. Inst. Fourier (Grenoble)}, 39(1):155--192, 1989.

\bibitem{S}
B.~Simon.
\newblock Wave operators for classical particle scattering.
\newblock {\em Comm. Math. Phys.}, 23:37--48, 1971.

\bibitem{SY1985}
A.~V. Sobolev and D.~R. Yafaev.
\newblock Phase analysis in the problem of scattering by a radial potential.
\newblock {\em Zap. Nauchn. Sem. Leningrad. Otdel. Mat. Inst. Steklov. (LOMI)},
  147:155--178, 206, 1985.
\newblock Boundary value problems of mathematical physics and related problems
  in the theory of functions, No. 17.

\bibitem{taylor:vol2}
M.~E. Taylor.
\newblock {\em Partial differential equations {II}. {Q}ualitative studies of
  linear equations}, volume 116 of {\em Applied Mathematical Sciences}.
\newblock Springer, New York, second edition, 2011.

\bibitem{V}
B.~R. Va{\u\i}nberg.
\newblock Quasiclassical approximation in stationary scattering problems.
\newblock {\em Funkcional. Anal. i Prilo\v zen.}, 11(4):6--18, 96, 1977.

\bibitem{VZ2000}
A.~Vasy and M.~Zworski.
\newblock Semiclassical estimates in asymptotically {E}uclidean scattering.
\newblock {\em Comm. Math. Phys.}, 212(1):205--217, 2000.

\bibitem{Yafaev}
D.~Yafaev.
\newblock {\em Mathematical Scattering Theory: analytic theory}.
\newblock American Mathematical Society, Providence, RI, 2010.

\bibitem{Z1992}
S.~Zelditch.
\newblock Kuznecov sum formulae and {S}zeg{\H o}\ limit formulae on manifolds.
\newblock {\em Comm. Partial Differential Equations}, 17(1-2):221--260, 1992.

\bibitem{Z1997}
S.~Zelditch.
\newblock Index and dynamics of quantized contact transformations.
\newblock {\em Ann. Inst. Fourier (Grenoble)}, 47(1):305--363, 1997.

\bibitem{Zw2012}
M.~Zworski.
\newblock {\em Semiclassical analysis}, volume 138 of {\em Graduate Studies in
  Mathematics}.
\newblock American Mathematical Society, Providence, RI, 2012.

\end{thebibliography}

\end{document}